\def\thesection{\arabic{section}}
\def\theequation{\thesection.\arabic{equation}}
\newcommand{\ds} {\displaystyle}
\newcommand{\e}{\epsilon}
\newcommand{\al} {\alpha}
\newcommand{\ba} {\beta}
\newcommand{\de} {\delta}
\newcommand{\ga} {\gamma}
\newcommand{\Om} {\Omega}
\newcommand{\ra} {\rightarrow}
\newcommand{\De} {\Delta}
\newcommand{\noi} {\noindent}
\newcommand{\mb} {\mathbb}
\newcommand{\mc} {\mathcal}
\def\theequation{\@arabic{\c@section}.\@arabic{\c@equation}}
\def\proof{\noindent{\textbf{Proof. }}}
\def\QED{\hfill {$\square$}\goodbreak \medskip}
\newtheorem{Theorem}{Theorem}[section]
\newtheorem{Lemma}[Theorem]{Lemma}
\newtheorem{Proposition}[Theorem]{Proposition}
\newtheorem{Remark}[Theorem]{Remark}
\newtheorem{Definition}[Theorem]{Definition}
\newtheorem{Example}{Example}
\begin{document}

\title{Kirchhoff equations with Choquard exponential type nonlinearity involving the fractional Laplacian}
\author{
{\bf   \; Sarika Goyal\footnote{email: sarika.goyal@bennett.edu.in}}\\
{\small Department of Mathematics, }\\
{\small Bennett University Greater Noida, Uttar Pradesh-201310, India},\;\\
 {\bf Tuhina Mukherjee\footnote{email: tuhina@tifrbng.res.in,}}\\
{\small T.I.F.R.  Centre for Applicable Mathematics,}\\
{\small Post Bag No. 6503, Sharadanagar, Yelahanka New Town, Bangalore 560065. }}
\date{}

\maketitle

\begin{abstract}
\noi In this article, we deal with the existence of
non-negative solutions of the class of following  non local problem
$$  \left\{
\begin{array}{lr}
 \quad - M\left(\displaystyle\int_{\mb R^n}\int_{\mb R^{n}} \frac{|u(x)-u(y)|^{\frac{n}{s}}}{|x-y|^{2n}}~dxdy\right) (-\De)^{s}_{n/s} u=\left(\displaystyle\int_{\Om}\frac{G(y,u)}{|x-y|^{\mu}}~dy\right)g(x,u) \; \text{in}\;
\Om,\\
\quad \quad u =0\quad\text{in} \quad \mb R^n \setminus \Om,
\end{array}
\right.
$$
where $(-\De)^{s}_{n/s}$ is the $n/s$-fractional Laplace operator, $n\geq 1$, $s\in(0,1)$ such that $n/s\geq 2$, $\Om\subset \mb R^n$ is a bounded domain with Lipschitz boundary, $M:\mb R^+\rightarrow \mb R^+$ and $g:\Omega\times\mb R\rightarrow \mb R$ are continuous functions, where $g$ behaves like $\exp({|u|^{\frac{n}{n-s}}})$ as $|u|\ra\infty$.
\medskip

\noi \textbf{Key words:} Doubly non local problems, Kirchhoff equation, Choquard nonlinearity, Trudinger-Moser nonlinearity.

\medskip

\noi \textit{2010 Mathematics Subject Classification:} 35R11, 35J60,
35A15

\end{abstract}

\bigskip

\section{Introduction}
\setcounter{equation}{0}
Let $n\geq 1$, $s\in(0,1)$ such that $n/s \geq 2$ and  $\Om\subset \mb R^n$ be a bounded domain with Lipschitz boundary then we intend to study the existence of a non negative solutions of following fractional Kirchhoff type problem with Trudinger-Moser type Choquard nonlinearity
$$ \mc{( M)}\quad \left\{
\begin{array}{lr}
 \quad - M\left(\displaystyle\int_{\mb R^n}\int_{\mb R^{n}} \frac{|u(x)-u(y)|^{\frac{n}{s}}}{|x-y|^{2n}}~dxdy\right) (-\De)^{s}_{n/s} u=\left(\displaystyle\int_{\Om}\frac{G(y,u)}{|x-y|^{\mu}}~dy\right)g(x,u) \; \text{in}\;
\Om,\\
\quad \quad u =0\quad\text{in} \quad \mb R^n \setminus \Om,
\end{array}
\right.
$$
where $(-\De)^{s}_{n/s}$ is the $n/s$-fractional Laplace operator which, up to a normalizing constant, is defined as
\begin{align*}
(-\De)^{s}_{n/s} u(x) = 2 \lim_{\e \ra 0^+} \int_{\mb R^n\setminus B_{\e}(x)} \frac{|u(x)-u(y)|^{\frac{n}{s}-2}(u(x)-u(y))}{|x-y|^{2n}} dy, \quad x\in \mb R^n, \; u\in C_{0}^{\infty}(\mb R^n).
\end{align*}
The functions $M:\mb R^+\rightarrow \mb R^+$ and $g:\Omega\times\mb R\rightarrow \mb R$ are continuous satisfying some appropriate conditions which will be stated later on.\\
Our problem $(\mc M)$ is basically driven by the Hardy-Littlewood-Sobolev inequality and the Trudinger-Moser inequality. Let us first recall the following well known Hardy-Littlewood-Sobolev inequality [Theorem 4.3, p.106] \cite{lieb}.
 \begin{Proposition}\label{HLS}
(\textbf {Hardy-Littlewood-Sobolev inequality}) Let $t$, $r>1$ and $0<\mu<n $ with $1/t+\mu/n+1/r=2$, $g \in L^t(\mathbb R^n)$ and $h \in L^r(\mathbb R^n)$. Then there exists a sharp constant $C(t,n,\mu,r)$, independent of $g,$ $h$ such that
 \begin{equation}\label{HLSineq}
 \int_{\mb R^n}\int_{\mb R^n} \frac{g(x)h(y)}{|x-y|^{\mu}}\mathrm{d}x\mathrm{d}y \leq C(t,n,\mu,r)\|g\|_{L^t(\mb R^n)}\|h\|_{L^r(\mb R^n)}.
 \end{equation}
{ If $t =r = \textstyle\frac{2n}{2n-\mu}$ then
 \[C(t,n,\mu,r)= C(n,\mu)= \pi^{\frac{\mu}{2}} \frac{\Gamma\left(\frac{n}{2}-\frac{\mu}{2}\right)}{\Gamma\left(n-\frac{\mu}{2}\right)} \left\{ \frac{\Gamma\left(\frac{n}{2}\right)}{\Gamma(n)} \right\}^{-1+\frac{\mu}{n}}.  \]
 In this case there is equality in \eqref{HLSineq} if and only if $g\equiv (constant)h$ and
 \[h(x)= A(\gamma^2+ |x-a|^2)^{\frac{-(2n-\mu)}{2}}\]
 for some $A \in \mathbb C$, $0 \neq \gamma \in \mathbb R$ and $a \in \mathbb R^n$.}
 \end{Proposition}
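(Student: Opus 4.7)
The plan is to follow the classical route via symmetric decreasing rearrangements, first establishing the inequality with some finite constant and then identifying the sharp constant by exploiting conformal symmetry. I would begin by treating $g$ and $h$ as nonnegative (otherwise replace by $|g|$, $|h|$) and invoke the layer-cake representation $g(x)=\int_0^\infty \chi_{\{g>\tau\}}(x)\,d\tau$, which reduces everything to estimating integrals of characteristic functions of measurable sets $A,B\subset\mb R^n$ against the Riesz kernel $|x-y|^{-\mu}$.

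Next I would apply the Riesz rearrangement inequality
\[
\int\!\!\int \chi_A(x)\,\chi_B(y)\,|x-y|^{-\mu}\,dx\,dy \;\le\; \int\!\!\int \chi_{A^*}(x)\,\chi_{B^*}(y)\,|x-y|^{-\mu}\,dx\,dy,
\]
where $A^*,B^*$ are the balls centered at the origin with the same measure. Reassembling the layer cakes, this reduces the proof to the case where $g$ and $h$ are radially symmetric and nonincreasing. For such functions one can decompose $\mb R^n$ into dyadic annuli and use the scaling relation $\|g\|_{L^t}$ in terms of the distribution function, together with the balance condition $1/t+\mu/n+1/r=2$, to get a direct estimate yielding the inequality with some constant $C(t,n,\mu,r)$. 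Alternatively one can use the weak-type estimate for the Riesz potential $I_\mu g(x)=\int g(y)|x-y|^{-(n-\mu/\text{? })}\ldots$ and interpolate via Marcinkiewicz, but the rearrangement route is cleaner and uniform in the parameters.

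For the diagonal case $t=r=\frac{2n}{2n-\mu}$, the hard part is identifying the sharp constant and extremizers. Here I would employ Lieb's competing-symmetries argument: the kernel $|x-y|^{-\mu}$ is conformally covariant of weight $\frac{2n-\mu}{2}$, so pulling the inequality back through stereographic projection to $S^n$ exhibits $SO(n+1)$ as an additional symmetry group which does not commute with the Euclidean translation-dilation-rearrangement group. Iterating rearrangement in $\mb R^n$ with the rotational symmetry on $S^n$ produces a sequence of functions converging to a joint fixed point of both actions, which must be the constant on $S^n$, i.e.\ $h(x)=A(\gamma^2+|x-a|^2)^{-(2n-\mu)/2}$ back in $\mb R^n$. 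Plugging this extremal into the integral and evaluating via the beta-function identities gives the explicit value of $C(n,\mu)$ in terms of gamma functions.

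The main obstacle is this last step: the rearrangement/interpolation argument yields a finite but non-explicit constant rather cheaply, but pinning down the sharp value and the uniqueness (up to conformal group) of the extremizers requires the delicate interplay between the two non-commuting symmetry groups and a compactness argument to show the competing-symmetries iteration actually converges. Since Proposition \ref{HLS} is quoted verbatim from \cite{lieb} I would simply cite Theorem 4.3 there rather than reproduce this machinery in full.
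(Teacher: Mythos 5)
The paper offers no proof of Proposition~\ref{HLS}: it is quoted verbatim from Lieb and Loss (Theorem~4.3, p.~106 of \cite{lieb}), and your closing remark correctly identifies that citing that source is the appropriate move. Your preliminary sketch of the layer-cake/Riesz-rearrangement reduction and Lieb's competing-symmetries identification of the sharp constant and extremizers is an accurate account of what lies behind that citation, so there is nothing to correct.
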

The study of Choquard equations originates from the work of S. Pekar \cite{pekar} and P. Choquard \cite{choquard} where they used elliptic equations with Hardy-Littlewood-Sobolev type nonlinearity to describe the quantum theory of a polaron at rest and to model an electron trapped in its own hole in the Hartree-Fock theory, respectively. For more details on the application of Choquard equations, we refer \cite{survey}. On the other hand, the boundary value problems involving Kirchhoff equations arise in several physical and
biological systems. These type of non-local problems were initially observed by Kirchhoff in 1883
 in the study of string or membrane vibrations to describe the transversal oscillations of a stretched string, particularly, taking into account the subsequent change in string length caused by oscillations.

L\"{u} \cite{Lu} in $2015$ studied the following Kirchhoff problem with Choquard nonlinearity
\[-\left( a+ b \int_{\mb R^3}|\nabla u|^2~dx\right)\Delta u +(1+\mu g(x))u = (|x|^{-\alpha}\ast |u|^p)u|^{p-2}u\; \text{in}\; \mb R^3 \]
for $a>0,\; b\geq 0,\; \alpha \in (0,3),\; p \in (2,6-\alpha)$, $\mu>0$ is a parameter and $g$ is a nonnegative continuous potential with some growth assumptions. He proved the existence of solution to the above problem for $\mu$ sufficiently large and also {showed} their concentration behavior when $\mu$ approaches $+\infty$. In \cite{FCX}, authors discuss the existence and  concentration of sign-changing solutions to a class of Kirchhoff-type systems with Hartree-type nonlinearity in $\mb R^3$ by the minimization argument on the sign-changing Nehari manifold and a quantitative deformation lemma. In the nonlocal case {that is problems involving the fractional Laplace operator}, Kirchhoff problem with Choquard nonlinearity has been studied by Pucci et al. in \cite{pucci} via variational techniques.\\

The study of elliptic equations involving nonlinearity with exponential growth are motivated by
the following Trudinger-Moser inequality in \cite{martinazi}, namely
\begin{Theorem} \label{moser}
let $\Om$ be a open bounded domain then we define $\tilde{W}^{s,n/s}_{0}(\Om)$ as the completion of $C_{c}^{\infty}(\Om)$ with respect to the norm $\|u\|^{\frac{n}{s}}= \displaystyle\int_{\mb R^{n}}\int_{\mb R^n} \frac{|u(x)-u(y)|^{\frac{n}{s}}}{|x-y|^{2n}} dx dy$. Then there exists a positive constant $\alpha_{n,s}$ given by
\[\alpha_{n,s}= \frac{n}{\omega_{n-1}}\left( \frac{\Gamma(\frac{n-s}{2})}{\Gamma(s/2)2^s \pi^{n/2}}\right)^{-\frac{n}{n-s}},\]
where $\omega_{n-1}$ be the surface area of the unit sphere in $\mb R^n$ and $C_{n,s}$ depending only on $n$ and $s$ such that
\begin{equation}\label{TM-ineq}
    \sup_{u \in \tilde{W}^{s,n/s}_{0}(\Om),\; \|u\|\leq 1}\int_\Om \exp\left( \alpha |u|^{\frac{n}{n-s}}\right)~dx\leq C_{n,s}|\Om|
    \end{equation}
for each $\alpha \in [0,\alpha_{n,s}]$.
Moreover there exists a $ \alpha_{n,s}^* \geq \alpha_{n,s}$ such that {the right hand side of\eqref{TM-ineq} is $+\infty$} for $\alpha>\alpha_{n,s}^*$.
\end{Theorem}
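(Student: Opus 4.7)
My plan is to follow the strategy for fractional Trudinger--Moser inequalities developed in \cite{martinazi} and related works, reducing the problem on $\tilde{W}^{s,n/s}_{0}(\Om)$ to a one--dimensional estimate for radially symmetric decreasing functions and then exploiting the Riesz potential representation to identify the sharp constant.

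\textbf{Step 1 (Reduction via rearrangement).} Given $u\in\tilde{W}^{s,n/s}_{0}(\Om)$ with $\|u\|\leq 1$, I would extend $u$ by zero to $\mb R^n$ and consider its symmetric decreasing rearrangement $u^{\ast}$ on the ball $\Om^{\ast}$ of the same measure as $\Om$. Since the exponential integral is invariant under rearrangement, it suffices to bound $\int_{\Om^{\ast}}\exp(\al |u^{\ast}|^{n/(n-s)})\,dx$. A fractional Polya--Szego type inequality for the Gagliardo seminorm (Almgren--Lieb / Frank--Seiringer) gives $\|u^{\ast}\|\leq \|u\|\leq 1$, so one may assume from the outset that $u$ is radially symmetric and decreasing.

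\textbf{Step 2 (Riesz potential representation and O'Neil estimate).} The key identity is that every $u\in\tilde{W}^{s,n/s}_{0}(\Om)$ can be recovered from $(-\De)^{s/2}_{n/s}u$ via a Riesz kernel of order $s$, more precisely $u=I_s\ast f$ (up to a normalizing constant involving $\Ga(\frac{n-s}{2})/(\Ga(s/2)2^s \pi^{n/2})$) with $\|f\|_{L^{n/s}(\mb R^n)}$ controlled by $\|u\|$. Applying the O'Neil rearrangement inequality for convolutions then yields a pointwise control
\[
u^{\ast}(r)\leq C\int_{r}^{\infty} f^{\ast\ast}(\om_{n-1}t^n/n)\,t^{s-1}\,dt,
\]
where the constant in front is sharp and accounts for the factor $\al_{n,s}^{-(n-s)/n}$ in the statement.

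\textbf{Step 3 (One--dimensional exponential estimate).} Via the substitution $r=|\Om^{\ast}|^{1/n}e^{-\tau/n}$, $t=|\Om^{\ast}|^{1/n}e^{-\sigma/n}$, the inequality of Step 2 becomes a one--dimensional estimate of Adams--Moser type: if $\phi(\tau)=(u^{\ast})^{n/(n-s)}$ arises from an $L^{n/s}$ density, then a classical lemma of Adams gives $\int_{0}^{\infty}\exp(\al\,\phi(\tau)-\tau)\,d\tau\leq C$ precisely when $\al\leq \al_{n,s}$. Integrating this back over $\Om^{\ast}$ yields \eqref{TM-ineq}.

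\textbf{Step 4 (Non--integrability above $\al_{n,s}^{\ast}$).} For the second statement, I would construct an explicit Moser--type concentrating sequence $\{u_{k}\}\subset \tilde{W}^{s,n/s}_{0}(\Om)$ with $\|u_{k}\|\leq 1$, modeled on the fractional heat kernel truncated near the origin so that $u_{k}^{n/(n-s)}$ grows like $\log(1/|x|)/\al_{n,s}$ in a shrinking ball. Then for any $\al>\al_{n,s}^{\ast}$ the integrand $\exp(\al|u_{k}|^{n/(n-s)})$ diverges at a rate faster than the measure of the concentration set shrinks, forcing the supremum to be $+\infty$. The main obstacle will be Step 1 and Step 2 together: establishing the sharp rearrangement inequality for the non--Hilbertian Gagliardo seminorm of order $n/s$, and extracting the precise constant $\al_{n,s}$ with the stated $\Ga$--function expression from the normalization of the Riesz kernel $I_{s}$. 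Once these sharp bookkeeping items are in place, the exponential estimate of Step 3 is classical.
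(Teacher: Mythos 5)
The paper does not actually prove Theorem \ref{moser}; it is quoted from Martinazzi \cite{martinazi} for the exponent $\alpha_{n,s}$ and from Parini--Ruf \cite{ruf} for the threshold $\alpha^*_{n,s}$, so there is no in-paper proof to compare against. Judged on its own merits, your sketch has two genuine gaps.

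First, your Steps 2--3 run the Adams machinery (Riesz potential representation $u=I_s\ast f$, O'Neil, one-dimensional exponential lemma). That machinery is native to the Bessel/Riesz potential spaces $\tilde H^{s,p}(\Om)$, where the norm is literally $\|(-\De)^{s/2}u\|_{L^p}$ and the representation is tautological. The theorem here is about the Gagliardo--Slobodeckij space $\tilde W^{s,n/s}_0(\Om)$, and for $p=n/s\neq 2$ the two scales of spaces do not coincide. Your assertion that ``$\|f\|_{L^{n/s}}$ is controlled by $\|u\|$'' is exactly the missing (and nontrivial) comparison $\|(-\De)^{s/2}u\|_{L^{n/s}(\mb R^n)}\leq C\,[u]_{W^{s,n/s}(\mb R^n)}$, which is what allows one to transfer Martinazzi's Adams-type bound from $\tilde H^{s,n/s}$ to $\tilde W^{s,n/s}_0$. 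Without that bridge the whole reduction collapses, and even with it the constant in \eqref{TM-ineq} carries the comparison constant, which is part of why the sharp threshold in the Gagliardo setting is only known to lie in the interval $[\alpha_{n,s},\alpha^*_{n,s}]$.

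Second, Step 4 miscalibrates the concentrating sequence. You want divergence for $\alpha>\alpha^*_{n,s}$, but you normalize so that $u_k^{n/(n-s)}\sim\log(1/|x|)/\alpha_{n,s}$, which is the normalization appropriate to the potential-space constant. In the Gagliardo seminorm the standard truncated-logarithm functions $\tilde w_k$ of \eqref{5.2} satisfy $\|\tilde w_k\|^{n/s}\to\gamma_{n,s}\neq 1$ (this is Proposition~5.1 of \cite{ruf}, quoted as \eqref{mf}); it is this nontrivial limit, after renormalizing $\tilde w_k/\|\tilde w_k\|$, that produces $\alpha^*_{n,s}=n\,\gamma_{n,s}^{s/(n-s)}\geq\alpha_{n,s}$. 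Your proposal skips the actual computation of the Gagliardo seminorm of the Moser functions, which is the entire content of the second half of the theorem; as written it would instead (incorrectly) suggest blow-up already at $\alpha_{n,s}$, contradicting \eqref{TM-ineq}. You should replace the heat-kernel ansatz by the explicit $\tilde w_k$ of \eqref{5.2}, compute (or cite) $\|\tilde w_k\|^{n/s}\to\gamma_{n,s}$, and deduce the divergence threshold from there.
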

It is proved in \cite{ruf} (see Proposition 5.2) that
\[\alpha_{n,s}^*= n \left(\frac{2(n\mc W_n)^2 \Gamma(\frac{n}{s}+1)}{n!}\sum_{i=0}^{\infty} \frac{(n+i-1)!}{i! (n+2i)^{\frac{n}{s}}}\right)^{\frac{s}{n-s}},\]
where $\mc W_n= \frac{w_{n-1}}{n}$ {is} the volume of the unit sphere in $\mb R^n$. It {is still} unknown whether $ \alpha_{n,s}^* = \alpha_{n,s}$ {or not}.

The $p$-fractional Kirchhoff problems involving the Trudinger-Moser type nonlinearity has been recently addressed in \cite{MRZ1, MRZ2}. We also refer \cite{pawan1, pawan2} to the readers, in the linear case i.e. when $p=2$. The Choquard equations with exponential type nonlinearities has been comparatively less attended. In this regard, we cite \cite{YangJDE} where authors studied a singularly perturbed nonlocal Schr\"odinger equation via variational techniques. We also refer \cite{JCA} for reference. On a similar note, there is no literature available on Kirchhoff problems involving the Choquard exponential nonlinearity except the very recent article \cite{AGMS} where authors studied the existence  of positive solutions to the following problem
\[-m\left(\int_\Om |\nabla u|^n ~dx \right)\Delta_nu = \left(\int_\Om \frac{F(y,u)}{|x-y|^{\mu}}dy\right)f(x,u), \; u>0\;\text{in}\; \Om,\; u=0\;\text{in}\; \partial\Om\]
where $-\Delta_n = \nabla. (|\nabla u|^{n-2}\nabla u)$, $\mu\in (0,n)$, $n\geq 2$, $m$ and $f$ are continuous functions satisfying some additional assumptions, using the concentration compactness arguments. They also {established} multiplicity result corresponding to a perturbed problem via minimization over suitable subsets of Nehari manifold. Whereas in the {$p$-fractional laplacian} case, motivated by above research, our paper represents the first article to consider the Kirchhoff problem with Choquard exponential nonlinearity. \\

The problem of the type $(\mc M)$ are categorized under doubly nonlocal problems because of the presence of the term $M\left(\displaystyle \int_{\mb R^{n}}\int_{\mb R^n} \frac{|u(x)-u(y)|^{\frac{n}{s}}}{|x-y|^{2n}} dx dy\right)$ and {$\displaystyle\left(\int_{\Om}\frac{G(y,u)}{|x-y|^{\mu}}~dy\right)g(x,u)$}
which does not allow the problem $(\mc M)$ to be  a pointwise identity. Additionally, we also deal with the degenerate case of Kirchhoff problem which is a new result even in the case of $s=1$. This phenomenon arises mathematical difficulties which makes the study of such a class of problem interesting. Generally, the main difficulty encountered in Kirchhoff problems is the competition between the growths of $M$ and $g$.
Precisely, mere weak limit of a Palais Smale (PS) sequence is not enough to claim that it is a weak solution to $(\mc M)$ because of presence of the function $M$,  which holds in the case of $M \equiv 1$. Next technical hardship emerge while proving convergence of the Choquard term with respect to (PS) sequence. We use delicate ideas in Lemma \ref{wk-sol} and Lemma \ref{PS-ws} to establish it. Following a variational approach, we prove that the corresponding energy functional to $(\mc M)$ satisfies the Mountain pass geometry and the Mountain pass critical level stays below a threshold (see Lemma \ref{lem7.2}) using the Moser type functions established by Parini and Ruf in \cite{ruf}. Then we perform a convergence analysis of the Choquard term with respect to the (PS)-sequences in Lemma  \ref{wk-sol}. This along with the higher integrability Lemma \ref{plc} benefited us to get the weak limit of (PS)-sequence as a weak solution of $(\mc M)$ leading to build the proof of our main result. The approach although may not be completely new but the problem is comprehensively afresh. \\

Our article is divided into 3 sections- Section $2$ illustrates the functional set up to study $(\mc M)$ and contains the main result that we intend to establish. Section $3$ contains the proof of our main result.

\section{Functional Setting and Main result}
 Let us consider the usual fractional Sobolev space
\[W^{s,p}(\Om):= \left\{u\in L^{p}(\Om); \frac{(u(x)-u(y))}{|x-y|^{\frac{n}{p}+s}}\in L^{p}(\Om\times\Om)\right\}\]
 endowed with the norm
\begin{align*}
\|u\|_{W^{s,p}(\Om)}=\|u\|_{L^p(\Om)}+ \left(\int_{\Om}\int_{\Om}
\frac{|u(x)-u(y)|^{p}}{|x-y|^{n+ps}}dxdy \right)^{\frac 1p}
\end{align*}
where $\Om \subset \mb R^n$ is an open set. {We denote $W^{s,p}_0(\Omega)$ as the completion of the space $C_c^\infty(\Omega)$ with respect to the norm $\|\cdot\|_{W^{s,p}(\Omega)}$.} To study fractional Sobolev spaces in details we refer to \cite{hitchker}.
Now we define
 \[ X_0 = \{u\in W^{s,n/s}(\mb R^n) : u = 0 \;\text{in}\; \mb R^n\setminus \Om\}\]
with respect to the norm
\[\|u\|_{X _0}=\left( \int_{\mb R^{n}}\int_{\mb R^n}\frac{|u(x)- u(y)|^{\frac{n}{s}}}{|x-y|^{2n}}dx
dy\right)^{\frac sn}= \left(\int_{Q}\frac{|u(x)- u(y)|^{\frac{n}{s}}}{|x-y|^{2n}}dx
dy\right)^{\frac sn},\]
where  $Q=\mb R^{2n}\setminus(\mc C\Om\times \mc C\Om)$ and
 $\mc C\Om := \mb R^n\setminus\Om$. Then $X_0$ is a reflexive Banach space and continuously embedded in $W^{s,p}_0(\Om)$. Also $X_0 \hookrightarrow \hookrightarrow L^q(\Om)$ compactly for each $q \in [1,\infty)$. Note that the norm
$\|.\|_{X_0}$ involves the interaction between $\Om$ and $\mb
R^n\setminus\Om$. We denote $\|.\|_{X_0}$ by $\|.\|$ in future, for notational convenience. This type of functional setting was introduced by Servadei and Valdinoci for $p=2$ in \cite{mp} and for $p\ne 2$ in \cite{ssi}.

\noi Moreover, we define the space
\[\tilde{W}_{0}^{s,p}(\Om)= \overline{C_{0}(\Om)}^{\|\cdot\|_{W^{s,p}(\mb R^n)}}.\]
The space $\tilde{W}_{0}^{s,p}(\Om)$ is equivalent to the completion of $C_{0}^{\infty}(\Om)$ with respect to the semi norm {$\int_{\mb R^{n}}\int_{\mb R^{n}}\frac{|u(x)- u(y)|^{\frac{n}{s}}}{|x-y|^{2n}}dxdy$} (see for example [\cite{BLP}, Remark 2.5]). If $\partial \Om$ is Lipschitz, then $\tilde{W}_{0}^{s,p}(\Om) =X_0$, (see[\cite{BPS}, Proposition B.1]).
The embedding $W_{0}^{s,\frac{n}{s}}(\Om) \ni u\longmapsto \exp({|u|^{\ba}}) \in L^{1}(\Om)$ is compact for all $\ba\in\left(1,\frac{n}{n-s}\right)$
and is continuous when $\ba=\frac{n}{n-s}$.

\noi We now state our assumptions on $M$ and $g$. The function $M:\mb R^+\rightarrow \mb R^+$ is a continuous function which satisfies the following assumptions:
\begin{enumerate}
  \item[$(M1)$] For all $t$, $s\geq 0$, it holds
  \[ \hat M(t+s)\geq \hat M(t)+\hat M(s),\]
  where $\hat M(t)= \int_{0}^{t} M(s)ds$, the primitive of $M$.
  \item[$(M2)$] There exists a $\gamma >1$ such that $t\mapsto \frac{M(t)}{t^{\gamma-1}}$ is non increasing for each $t>0$.
  \item[$(M3)$] For each $b>0$, there exists a $\kappa:= \kappa(b)>0$ such that $M(t)\geq \kappa$ whenever $t \geq b$.
 \end{enumerate}
The condition $(M3)$ asserts that the function $M$ has possibly a zero only when $t=0$.
\begin{Remark}\label{rem-M}
From $(M2)$, we can easily deduce that $\gamma \hat M(t)-M(t)t$ is non decreasing for $t>0$ and
\begin{equation}\label{cnd-M}
     \gamma \hat M(t)-M(t)t \geq 0\;\;\; \forall \; t \geq 0.
\end{equation}
\end{Remark}
We also have the following remark as a consequence of \eqref{cnd-M}.

\begin{Remark}
For each $t \geq 0$, by using \eqref{cnd-M} we have
\[\frac{d}{dt}\left(\frac{\hat M(t)}{t^\gamma}\right)= \frac{M(t)}{t^\gamma}-\frac{\gamma \hat M(t)}{t^{\gamma +1}}\leq 0.\]
So the map $t \mapsto \frac{\hat M(t)}{t^\gamma}$ is non increasing for $t> 0$. Hence
\begin{equation}\label{cnd-M1}
      \hat M(t)\geq \hat M(1)t^\gamma\; \text{for all}\; t \in[ 0,1],
\end{equation}
and
\begin{equation}\label{cnd-M2}
      \hat M(t)\leq \hat M(1)t^\gamma\; \text{for all}\; t \geq 1.
\end{equation}
\end{Remark}

\noi We note that the condition $(M1)$ is valid whenever $M$ is
non decreasing.
\begin{Example}
Let  $M(t)=m_0+at^{\gamma-1}$, where $m_0,a\geq 0$ and $\gamma>1$ such that  $m_0+a>0$ then $M$ satisfies the
conditions $(M1)-(M3)$. If $m_0=0$, this forms an example of the degenerate case whereas of the non degenerate case if $m_0>0$.
\end{Example}
\noi The nonlinearity $g:\Omega\times\mb R\rightarrow \mb R$ is {a continuous function} such that $g(x,t)=h(x,t) \exp({|t|^{\frac{n}{n-s}}})$, where $h(x,t)$ satisfies the following assumptions:
\begin{enumerate}
\item[$(g1)$] $h\in C^1(\overline{\Om}\times \mb R)$, $h(x,t)=0,$ for all $t\le 0$, $h(x,t)>0,$ \text{for all}
$t>0$.
\item[$(g2)$] For any $\e>0,$ $\ds \lim_{t\ra \infty}\sup_{x\in \overline{\Om}} h(x,t) \exp(-\e |t|^{\frac{n}{n-s}} )=0$, $\ds\lim_{t\ra \infty}\inf_{x\in \overline{\Om}} h(x,t) \exp(\e|t|^{\frac{n}{n-s}})=\infty.$
\item[$(g3)$] There exist positive constants $T$, $T_0$ and $\gamma_0$ such that
\[ 0<t^{\gamma_0} G(x,t)\le T_0 g(x,t)\;\mbox{for all}\; (x,t)\in \Om\times[t_0,+\infty).\]
\item[$(g4)$] For $\gamma >1$ (defined in (M2)), there exists a $l>\frac{\gamma n}{2s}-1$ such that the map $t \mapsto \frac{g(x,t)}{t^{l}}$ is increasing on $\mb R^{+} \setminus \{0\}$, uniformly in $x\in \Om$.
\end{enumerate}
\begin{Remark}\label{rem2}Condition $(g4)$ implies that for each $\ds x\in \Omega$,
$$\; t\mapsto \frac{g(x,t)}{t^{\frac{\gamma n}{2s}-1}} \;\text{is increasing for}\; t>0 \; \text{and}\; \ds\lim_{t\rightarrow 0^+} \frac{g(x,t)}{t^{\frac{\gamma n}{2 s}-1}}=0,$$ uniformly in $x\in \Om$. Also, for each $(x,t)\in \Om \times \mb R$ we have
\[(l+1)G(x,t)\leq tg(x,t).\]
\end{Remark}
\begin{Example}
Let $g(x,t)= h(x,t)e^{|t|^{\frac{n}{n-s}}}$, where $h(x,t)=\left\{\begin{array}{lr}
0 \; \mbox{if} \; t\leq 0 \\
{t^{\al+ \left(\frac{\gamma n}{2s}-1\right)} \exp(dt^{\ba})}\; \mbox{if} \; t> 0.
\end{array}\right.$
for some $\al>0$, $0<d\leq \alpha_{n,s}$ and $1\leq \beta <\frac{n}{n-s}$. Then $g$ satisfies all the conditions from $(g1)-(g4)$.
\end{Example}
\begin{Definition}
We say that $u\in X_0$ is a weak solution of $(\mc{M})$ if, for all $\phi \in X_0$, it satisfies
\begin{align*}M(\|u\|^{\frac{n}{s}})\int_{\mb R^{2n}}\frac{|u(x)-u(y)|^{\frac{n}{s}-2}(u(x)-u(y))(\phi(x)-\phi(y))}{|x-y|^{2n}}dxdy =\int_\Om \left(\int_{\Om}\frac{G(y,u)}{|x-y|^{\mu}} dy \right) g(x,u)\phi~ dx.
\end{align*}
\end{Definition}

\noi Before stating our main Theorem, we recall a result of \cite{ruf} which will be used to find an upper bound for the Mountain Pass critical level.
Assume that $0\in \Omega$ and $B_1(0)\subset \Om$.  Then we consider the following Moser type functions which is given by equation $(5.2)$ of \cite{ruf}.
For each $x \in \mb R^n$ and $k \in \mb N$,
\begin{equation}\label{5.2}
\tilde{w_k}(x)=\left\{
    \begin{split}
        &|\log k|^{\frac{n-s}{n}}, \;\;\mbox{if}\; 0\leq |x|\leq \frac{1}{k},\\
        & \frac{|\log (|x|)|}{|\log(1/k)|^{s/n}},\;\;\mbox{if}\; \frac{1}{k} \leq |x|\leq 1,\\
        & 0,\; \;\mbox{if}\; |x|\geq 1,
    \end{split}
    \right.
\end{equation}
then supp$(\tilde{w}_k) \subset  B_1(0)\subset \Omega$ and $\tilde{w}_k|_{B_1(0)}\in W^{s,\frac{n}{s}}_{0}(B_1(0))$.\\
\noi Now by Proposition $5.1$ of \cite{ruf} we know that
\begin{equation}\label{mf}
    \lim_{k \to \infty}\|\tilde{w}_k\|^{\frac{n}{s}}= \lim_{k \to \infty} \int_{\mb R^n}\int_{\mb R^n}\frac{|\tilde{w}_k(x)-\tilde{w}_k(y)|^{\frac{n}{s}}}{|x-y|^{2n}}dx dy = \gamma_{n,s},
\end{equation}
where
\[\gamma_{n,s}:= \frac{2(n\mc W_n)^2 \Gamma(\frac{n}{s}+1)}{n!}\sum_{i=0}^{\infty} \frac{(n+i-1)!}{i! (n+2i)^{\frac{n}{s}}}.\]
\noi where $\mc W_n$ denotes the volume of $n$-dimensional unit {sphere}.
\noi We also recall the following result of Lions known as higher integrability Lemma in case of fractional Laplacian, proved in \cite{perara}.
\begin{Lemma}\label{plc}
Let $\{v_k : \|v_k\|=1\}$ be a sequence in $W^{s,n/s}_{0}(\Om)$
converging weakly to a non-zero function $v$. Then for every $p$ such that
$p<\alpha_{n,s}(1-\|v\|^{\frac{n}{s}})^{\frac{-s}{n-s}}$,\[\sup_{k}\int_{\Om} \exp({p
|v_k|^{\frac{n}{n-s}}})< +\infty.\]
\end{Lemma}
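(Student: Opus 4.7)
The plan is to combine the Brézis--Lieb identity for the Gagliardo seminorm with a smooth approximation of $v$ and the Trudinger--Moser inequality (Theorem~\ref{moser}). Write $q := n/(n-s)$ and $\ell := (1-\|v\|^{n/s})^{s/n}$, so that the hypothesis reads $p < \alpha_{n,s}\ell^{-q}$. One may assume $\|v\| < 1$ (hence $\ell > 0$), since otherwise the bound on $p$ is vacuous.

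First, passing to a subsequence via the compact embedding $X_0 \hookrightarrow L^r(\Om)$ ($r \in [1,\infty)$), we have $v_k \to v$ a.e.\ in $\Om$. Interpreting the seminorm as the $L^{n/s}(Q)$-norm of $U_k(x,y) := (v_k(x)-v_k(y))|x-y|^{-2s}$, pointwise convergence $U_k \to U$ a.e.\ in $Q$ follows, so the Brézis--Lieb lemma in $L^{n/s}(Q)$ gives
\[
\|v_k\|^{n/s} \;=\; \|v\|^{n/s} + \|v_k - v\|^{n/s} + o(1).
\]
Since $\|v_k\| = 1$, this forces $\|v_k - v\|^{n/s} \to \ell^{n/s}$, and in particular $\|v_k - v\|^{q} \to \ell^{q}$.

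Next, choose small parameters $\epsilon,\eta>0$ (to be fixed below). Using that $C_0^\infty(\Om)$ is dense in $X_0$ (recall $\partial\Om$ is Lipschitz, so $\tilde W^{s,n/s}_0(\Om)=X_0$), pick $v_\eta \in C_0^\infty(\Om)$ with $\|v - v_\eta\| < \eta$, and denote $M_\eta := \|v_\eta\|_{L^\infty(\Om)}$ and $z_k := v_k - v_\eta$. The triangle inequality combined with Step~1 gives $\limsup_k \|z_k\| \le \ell + \eta$. The elementary inequality $(a+b)^q \le (1+\epsilon)a^q + C_\epsilon b^q$, applied pointwise to $v_k = z_k + v_\eta$, yields
\[
\exp(p|v_k|^q) \;\le\; \exp(pC_\epsilon M_\eta^q)\,\exp\bigl(p(1+\epsilon)|z_k|^q\bigr).
\]
Since $p\ell^q < \alpha_{n,s}$, we may now fix $\epsilon,\eta$ so small that $p(1+\epsilon)(\ell+\eta)^q < \alpha_{n,s}$. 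Then for all $k$ sufficiently large, $p(1+\epsilon)\|z_k\|^q \le \alpha_{n,s}$, and Theorem~\ref{moser} applied to $z_k/\|z_k\|$ bounds $\int_\Om \exp(p(1+\epsilon)|z_k|^q)\,dx$ uniformly by $C_{n,s}|\Om|$. Combined with the pointwise bound above, this yields $\sup_k \int_\Om \exp(p|v_k|^q)\,dx < \infty$, as required.

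The principal obstacle is the Brézis--Lieb identity for the nonlocal seminorm; once that is in hand, the remainder is careful bookkeeping to keep $p(1+\epsilon)\|z_k\|^q$ strictly below the Trudinger--Moser threshold $\alpha_{n,s}$. The smooth truncation of $v$ is essential: a more direct Hölder splitting of $\exp(p|v_k|^q) \le \exp(rp(1+\epsilon)|v_k-v|^q)^{1/r}\exp(r'pC_\epsilon |v|^q)^{1/r'}$ would demand $\int_\Om \exp(r'pC_\epsilon |v|^q)\,dx < \infty$ for a constant $C_\epsilon$ that blows up as $\epsilon \to 0$, and this cannot be guaranteed for a generic $v \in X_0$.
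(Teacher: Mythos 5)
The paper does not give its own proof of this lemma; it is quoted as a known result, with a pointer to the reference~\cite{perara}, and used as a black box. So there is nothing internal to compare against, but your argument is a correct, self-contained reconstruction of the standard Lions-type improvement: pointwise a.e.\ convergence (from the compact embedding, after passing to a subsequence, which is harmless for a $\sup_k$ claim) together with Brézis--Lieb applied to $U_k(x,y)=(v_k(x)-v_k(y))|x-y|^{-2s}$ in $L^{n/s}(Q)$ yields $\|v_k-v\|^{n/s}\to 1-\|v\|^{n/s}=\ell^{n/s}$; the smooth truncation $v_\eta$ and the elementary inequality $(a+b)^q\le(1+\epsilon)a^q+C_\epsilon b^q$ then reduce the estimate for $v_k$ to one for $z_k=v_k-v_\eta$; and since $\limsup_k\|z_k\|\le\ell+\eta$ can be pushed strictly below $(\alpha_{n,s}/(p(1+\epsilon)))^{1/q}$, Theorem~\ref{moser} applied to $z_k/\|z_k\|$ closes the argument. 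The bookkeeping of constants is consistent throughout, and the linearity $U_{v_k}-U_v=U_{v_k-v}$ that you implicitly use to rewrite $\|U_k-U\|_{L^{n/s}}$ as $\|v_k-v\|$ is exactly what Brézis--Lieb needs here.

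Two minor corrections to your commentary, neither affecting the proof. First, the dismissal of $\|v\|=1$ as making ``the bound on $p$ vacuous'' is backwards: when $\|v\|=1$ the hypothesis $p<\alpha_{n,s}(1-\|v\|^{n/s})^{-s/(n-s)}$ is satisfied by every finite $p$, so the conclusion asserts \emph{more}, not nothing. Fortunately your argument covers this case unchanged, with $\ell=0$ (so $\|v_k-v\|\to 0$ and $\limsup_k\|z_k\|\le\eta$). Second, the closing claim that a direct Hölder splitting ``cannot be guaranteed for a generic $v\in X_0$'' is not quite right: for any fixed $v\in X_0$ one does have $\int_\Om\exp(c|v|^q)\,dx<\infty$ for every $c>0$ --- proved precisely by the smooth-truncation device you invoke --- so the Hölder route would also work, merely deferring the truncation to a sub-step. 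Your route is cleaner, but the alternative is not obstructed in principle.
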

Now we state our main result:
\begin{Theorem}\label{thm711}
Suppose $(M1)-(M3)$ and $(g1)-(g4)$ hold. Assume in addition that for {$\ba> \frac{2\al_{n,s}^{*}}{\al_{n,s}}$,}
\begin{align}\label{h-growth}
\lim_{t\ra +\infty} \frac{t g(x,t) G(x,t)}{\exp\left(\ba t^{\frac{n}{n-s}}\right)}=\infty \;\mbox{uniformly in}\; x\in \overline{\Om}.
\end{align}
 Then, problem $(\mc {M})$ admit a non negative non trivial solution.
\end{Theorem}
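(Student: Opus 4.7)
The plan is to apply the Mountain Pass Theorem to the energy functional
\begin{align*}
J(u) = \frac{s}{n}\hat M(\|u\|^{n/s}) - \frac{1}{2}\int_\Om\int_\Om \frac{G(y,u(y))\,G(x,u(x))}{|x-y|^\mu}\,dx\,dy
\end{align*}
on $X_0$, whose nontrivial critical points give weak solutions of $(\mc M)$. Non-negativity of the resulting solution will follow by testing against $u^-$ and invoking $(g1)$, so the task reduces to producing a nontrivial critical point.

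First I would verify the Mountain Pass geometry. Near $0$, Remark \ref{rem2} gives $g(x,t)=o(t^{\gamma n/(2s)-1})$ while $(g2)$ controls large $|t|$ by $C_\e \exp((\al_{n,s}+\e)|t|^{n/(n-s)})$; the Hardy-Littlewood-Sobolev inequality with conjugate exponent $2n/(2n-\mu)$, combined with \eqref{TM-ineq} and the lower bound \eqref{cnd-M1}, yields $J(u)\ge \rho_0>0$ on a small sphere $\|u\|=\rho$. For the far end, fix $v\ge 0$, $v\not\equiv 0$: condition $(g3)$ forces exponential-type growth of $G(x,tv)$ as $t\to\infty$, whereas \eqref{cnd-M2} bounds $\hat M$ polynomially, so $J(tv)\to -\infty$. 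Consequently the mountain pass level $c = \inf_{\gamma\in\Gamma}\max_{t\in[0,1]} J(\gamma(t))$ is well-defined and strictly positive.

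The decisive step, and the principal technical obstacle I foresee, is proving the strict upper bound
\begin{align*}
c \;<\; c^* \;:=\; \frac{s}{n}\,\hat M\!\left(\!\left(\frac{\al_{n,s}}{\al_{n,s}^*}\right)^{\!\!(n-s)/s}\!\!\gamma_{n,s}\right),
\end{align*}
or an analogous quantitative threshold matched to Lemma \ref{plc}. I would test along the ray $t\mapsto J(t\,\tilde w_k/\|\tilde w_k\|)$ with the Moser functions \eqref{5.2}, using \eqref{mf} and the hypothesis \eqref{h-growth} together with $\ba > 2\al_{n,s}^*/\al_{n,s}$. The delicate part is extracting enough gain from the Choquard double integral over $B_{1/k}(0)\times B_{1/k}(0)$, where $\tilde w_k$ attains its peak value $|\log k|^{(n-s)/n}$; the exponential-squared lower bound on $G^2$ forced by \eqref{h-growth} is precisely what allows the Choquard contribution to dominate $\hat M(\|\tilde w_k\|^{n/s})$ and push the max of the ray below $c^*$ for large $k$.

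Finally, let $\{u_k\}$ be a Palais--Smale sequence at level $c$. The combination $J(u_k)-\tau \langle J'(u_k),u_k\rangle$ with $\tau=1/((l+1)\gamma)$, together with $(M2)$, Remark \ref{rem-M} and the inequality $(l+1)G(x,t)\le tg(x,t)$ from Remark \ref{rem2} (applicable because $l+1>\gamma n/(2s)$), yields boundedness of $\|u_k\|$ and the quantitative bound $\limsup_k \|u_k\|^{n/s} < (\al_{n,s}/\al_{n,s}^*)^{(n-s)/s}\gamma_{n,s}$. Up to a subsequence, $u_k\rightharpoonup u$ in $X_0$, strongly in $L^q(\Om)$ for every $q\ge 1$, and a.e.\ in $\Om$. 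Lemma \ref{plc} then supplies uniform $L^p$ bounds on $\exp(|u_k|^{n/(n-s)})$ for some $p$ above $2n\al_{n,s}^*/(2n-\mu)$, which via Hardy--Littlewood--Sobolev yields the convergence of the Choquard integrand formalised in Lemma \ref{wk-sol}; combined with $(M3)$ to prevent degeneration of the Kirchhoff coefficient (once one checks $u\not\equiv 0$), Lemma \ref{PS-ws} identifies $u$ as a weak solution of $(\mc M)$. Nontriviality is guaranteed by $J(u)=c>0$, and non-negativity by the standard test with $u^-$.
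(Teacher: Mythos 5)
Your outline captures the right skeleton --- Mountain Pass geometry, a strict upper bound on the minimax level tested on the Moser functions, PS-boundedness, compactness of the Choquard term --- but it passes over the central difficulty of a Kirchhoff problem and gets the threshold wrong, so as written it would not close.

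The principal gap is the step ``Lemma \ref{PS-ws} identifies $u$ as a weak solution''. Lemma \ref{PS-ws} only gives convergence of the right-hand side $\int_\Om(\int_\Om G(y,u_k)|x-y|^{-\mu}dy)\,g(x,u_k)\phi$. On the left, weak convergence $u_k\rightharpoonup u_0$ yields $M(\|u_k\|^{n/s})\to M(\rho_0^{n/s})$ with $\rho_0=\lim\|u_k\|$, and there is no reason for $\rho_0=\|u_0\|$; the weak limit is then a solution of the equation with the \emph{wrong} coefficient $M(\rho_0^{n/s})$. Invoking $(M3)$ only rules out degeneracy, it does not fix the value of the coefficient. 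The paper spends the bulk of its proof of Theorem~\ref{thm711} on exactly this: Claim~1 ($u_0\not\equiv 0$, via Lemma \ref{wk-sol} and the level bound), Claim~2 ($\langle J'(u_0),u_0\rangle\ge 0$, using the Nehari comparison $c_*\le b$ from Lemmas \ref{lem7.3} and \ref{3.7}), and Claim~3 ($J(u_0)=c_*$, via the higher-integrability Lemma~\ref{plc} applied to $v_k=u_k/\|u_k\|$, the elementary inequality $|a-b|^p\le 2^{p-2}(|a|^{p-2}a-|b|^{p-2}b)(a-b)$, and the resulting \emph{strong} convergence $u_k\to u_0$ in $X_0$). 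Your sketch has none of this machinery; without it you cannot pass from a Palais--Smale sequence to a weak solution.

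Two secondary problems. First, your proposed threshold $c^*=\frac{s}{n}\hat M((\alpha_{n,s}/\alpha_{n,s}^*)^{(n-s)/s}\gamma_{n,s})$ simplifies (using $\alpha_{n,s}^*=n\gamma_{n,s}^{s/(n-s)}$) to $\frac{s}{n}\hat M((\alpha_{n,s}/n)^{(n-s)/s})$, which is independent of $\mu$. The correct level in the paper is $\frac{s}{n}\hat M\bigl(\bigl(\tfrac{2n-\mu}{2n}\alpha_{n,s}\bigr)^{(n-s)/s}\bigr)$; the factor $\tfrac{2n-\mu}{2n}$ is exactly what is needed so that, after Hardy--Littlewood--Sobolev with exponent $\tfrac{2n}{2n-\mu}$, the exponent in the Trudinger--Moser integral stays $\le\alpha_{n,s}$. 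Second, the claimed ``quantitative bound $\limsup_k\|u_k\|^{n/s}<\dots$'' does not follow from the $J(u_k)-\tau\langle J'(u_k),u_k\rangle$ boundedness computation; that only yields a crude bound. The threshold-level bound on $\|u_k\|$ is obtained in the paper only in the case $u_0\equiv 0$ (Claim~1), where the Choquard term vanishes in the limit by Lemma \ref{wk-sol}; in the generic case $u_0\ne 0$ one instead bounds $\rho_0^{n/s}$ in terms of $\|u_0\|^{n/s}$ and the threshold via $(M1)$, and then applies Lemma \ref{plc} to the \emph{normalized} sequence. Finally, Lemma~\ref{plc} is not used to prove Lemma~\ref{wk-sol} --- the latter is a truncation argument using $(g3)$ and Fubini --- and the cutoff in Lemma~\ref{plc} involves $\alpha_{n,s}$, not $\alpha_{n,s}^*$.
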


\section{Proof of Main result}

We begin this section with the study of mountain pass structure and Palais-Smale sequences corresponding to the energy functional  $J:  X_{0}\rightarrow \mb R$ associated to the problem $(\mc M)$ {which} is defined as
\[ J(u)=\frac{s}{n}\hat M(\|u\|^{\frac{n}{s}})-\frac{1}{2}\int_\Om \left(\int_{\Om}\frac{G(y,u)}{|x-y|^{\mu}}dy\right)  G(x,u)~ dx.\]
 From the assumptions, $(g1)-(g4)$, we obtain that for any $\e>0$, $r\geq 1$, $1\leq \alpha <l+1$  there exists $C(\e)>0$ such that
\begin{align}\label{k1}
|G(x,t)| \le \e |t|^{\alpha} + C(\e) |t|^r \exp((1+\e)|t|^{\frac{n}{n-s}}),\;\; \text{for all}\; (x,t)\in \Omega \times \mb R.
\end{align}
Now by Proposition \ref{HLS}, for any $u \in X_0$ we obtain
\begin{align}\label{k2}
\int_{\Om}\left(\int_\Om \frac{G(y,u)}{|x-y|^{\mu}}dy\right)G(x,u)~dx  \leq C(n,\mu){\|G(\cdot,u)\|_{L^\frac{2n}{2n-\mu}(\Om)}^2}.
\end{align}
This implies that $J$ is well defined {using Theorem \ref{moser}}. Also one can easily see that $J$ is Fr$\acute{e}$chet differentiable and the critical points of $J$ are the weak solutions of $(\mc M)$.
\begin{Lemma}\label{lem7.1}
 Assume that the conditions $(M1)$ and $(g1)-(g4)$ hold. Then $J$ satisfies the Mountain Pass geometry around $0$.
\end{Lemma}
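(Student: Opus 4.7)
The plan is to verify the two standard Mountain Pass conditions near the origin: (i) there exist $\rho,\delta>0$ such that $J(u)\ge\delta$ whenever $\|u\|=\rho$, and (ii) there exists $e\in X_0$ with $\|e\|>\rho$ and $J(e)\le 0$. Since $G(x,0)=0$ and $\hat M(0)=0$, the value $J(0)=0$ is free.

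For (i), I would estimate the Choquard energy from above via the HLS inequality \eqref{k2}, reducing the problem to bounding $\|G(\cdot,u)\|_{L^{2n/(2n-\mu)}}$. Plugging in the pointwise bound \eqref{k1} with an exponent $\alpha\in(\gamma n/(2s),\,l+1)$ (this interval is non-empty by $(g4)$) and splitting the exponential factor by H\"older, the key quantity to control becomes an integral of the form
\[
\int_\Omega \exp\!\bigl(q_2(1+\epsilon)|u|^{n/(n-s)}\bigr)\,dx,
\]
which by Theorem \ref{moser} is bounded uniformly for $u$ in a small ball once $q_2(1+\epsilon)\|u\|^{n/(n-s)}\le \alpha_{n,s}$. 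Combined with the embedding $X_0\hookrightarrow L^p(\Omega)$ for every $p\ge 1$, this yields, for $\|u\|$ small,
\[
\int_\Omega\!\int_\Omega\frac{G(y,u)G(x,u)}{|x-y|^\mu}\,dxdy \;\le\; C_1\|u\|^{2\alpha}+C_2\|u\|^{2r},
\]
where $r>\gamma n/(2s)$ is chosen freely. The Kirchhoff term is bounded below using \eqref{cnd-M1}: for $\|u\|\le 1$ one has $\hat M(\|u\|^{n/s})\ge \hat M(1)\|u\|^{\gamma n/s}$. Putting the two together,
\[
J(u)\;\ge\;\frac{s\,\hat M(1)}{n}\|u\|^{\gamma n/s}-C_1\|u\|^{2\alpha}-C_2\|u\|^{2r},
\]
and because both exponents $2\alpha$ and $2r$ strictly exceed $\gamma n/s$, choosing $\rho>0$ small enough gives $J(u)\ge\delta>0$ on the sphere $\|u\|=\rho$.

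For (ii), I would fix any non-negative, non-trivial $u_0\in X_0$ and let $t\to+\infty$. By \eqref{cnd-M2}, once $t$ is large the Kirchhoff contribution satisfies $\hat M(\|tu_0\|^{n/s})\le \hat M(1)\,t^{\gamma n/s}\|u_0\|^{\gamma n/s}$, so it grows at most polynomially in $t$. For the Choquard contribution, pick a measurable set $A\subset\Omega$ of positive measure on which $u_0\ge c_0>0$; by $(g2)$ we have $G(x,\tau)\ge C\exp\!\bigl((1-\epsilon)\tau^{n/(n-s)}\bigr)$ for $\tau$ large (monotonicity of $G$ in $t$ following from $g>0$ for $t>0$), so restricting both integrals to $A$ and bounding $|x-y|\le\mathrm{diam}(\Omega)$ yields
\[
\int_\Omega\!\int_\Omega\frac{G(y,tu_0)G(x,tu_0)}{|x-y|^\mu}\,dxdy\;\ge\;C'\exp\!\bigl(2(1-\epsilon)(tc_0)^{n/(n-s)}\bigr),
\]
which blows up super-exponentially and thus dominates the polynomial Kirchhoff term. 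Hence $J(tu_0)\to-\infty$, and choosing $t$ large enough gives the desired $e=tu_0$.

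The main obstacle is the balance in part (i): one has to match the admissible Trudinger--Moser exponent with the degree $\gamma n/s$ produced by the degenerate Kirchhoff term. This is exactly where the assumption $l>\gamma n/(2s)-1$ in $(g4)$ is essential, as it is precisely what allows the polynomial upper bound on the Choquard energy to sit at strictly higher order than the lower bound on $\hat M(\|u\|^{n/s})$, so that the $\|u\|^{\gamma n/s}$ term genuinely dominates near zero.
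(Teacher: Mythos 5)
Your proof of the geometry near zero (part (i)) is essentially identical to the paper's: HLS inequality \eqref{k2}, the growth bound \eqref{k1}, H\"older plus the fractional Trudinger--Moser inequality \eqref{TM-ineq} for $\|u\|$ small, and the degenerate Kirchhoff lower bound $\hat M(\|u\|^{n/s})\ge\hat M(1)\|u\|^{\gamma n/s}$ from \eqref{cnd-M1}, matched against exponents $2\alpha,2r>\gamma n/s$ made available by $(g4)$.

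Where you genuinely depart from the paper is in part (ii). The paper exploits only the polynomial consequence of $(g4)$, namely $G(x,t)\ge C_1 t^{l+1}-C_2$ for $t\ge 0$, expands the Choquard bilinear form in powers of $t$, and together with the upper bound $\hat M(\|t\phi\|^{n/s})\le\hat M(1)t^{\gamma n/s}\|\phi\|^{\gamma n/s}$ from \eqref{cnd-M2} obtains $J(t\phi)\le C_3+C_4 t^{\gamma n/s}-C_5 t^{2(l+1)}+C_6 t^{l+1}\to-\infty$ since $l+1>\gamma n/(2s)$. You instead use $(g2)$ to derive an exponential lower bound $G(x,\tau)\ge C\exp\bigl((1-\e)\tau^{n/(n-s)}\bigr)$ for large $\tau$, restrict the double integral to a set $A$ of positive measure where $u_0\ge c_0>0$ (which exists since $u_0$ is nonnegative and nontrivial), and bound $|x-y|^{-\mu}$ from below by $(\mathrm{diam}\,\Omega)^{-\mu}$ to get super-exponential blow-up. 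Both routes are valid. The paper's version is more economical: it stays at the polynomial scale, uses exactly the hypothesis $(g4)$ that already governs part (i), and the comparison $2(l+1)>\gamma n/s$ is precisely the structural inequality introduced by $(g4)$, making it transparent which assumption does the work. Your version uses strictly more (exponential) information than necessary, but in exchange it does not rely on $(g4)$ at all for (ii) and is the same type of estimate the paper later uses in Lemma \ref{lem7.2} to control the Mountain Pass level; as such it is a correct and somewhat stronger argument for this step, albeit slightly overkill.

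One small caution on your exponential bound for $G$: passing from $g(x,\tau)\ge\exp\bigl((1-\e)\tau^{n/(n-s)}\bigr)$ to a lower bound on $G(x,t)=\int_0^t g(x,\tau)\,d\tau$ requires a short observation (e.g.\ integrating over $[t-1,t]$), and the resulting constant in the exponent is $(1-\e')$ with $\e'$ slightly larger than $\e$; this does not affect the conclusion but is worth making explicit if you write this up.
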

\begin{proof}
From \eqref{k1}, \eqref{k2}, H\"{o}lder inequality and Sobolev embedding, we have
{\small\begin{align}\label{kc-MP1}
 &\int_{\Om}\left(\int_\Om \frac{G(y,u)}{|x-y|^{\mu}}dy\right)G(x,u)~dx \notag\\
 &\leq C(n,\mu)2^{2}\left(\e^{\frac{2n}{2n-\mu}} \int_\Om |u|^{\frac{2n\alpha}{2n-\mu}} + (C(\e))^{\frac{2n}{2n-\mu}} \int_\Om |u|^{\frac{2rn}{2n-\mu}}\exp\left(\frac{2n(1+\e)}{2n-\mu}|u|^{\frac{n}{n-s}} \right) \right)^{\frac{2n-\mu}{n}} \notag\\
 &\leq C \left({\e^{\frac{2n}{2n-\mu}} } \int_\Om |u|^{\frac{2n\alpha}{2n-\mu}} + C_1(\e) \|u\|^{\frac{2r n}{2n-\mu}} \left( \int_\Om\exp\left(\frac{4n(1+\e)\|u\|^{\frac{n}{n-s}}}{2n-\mu}\left(\frac{|u|}{\|u\|}\right)^{\frac{n}{n-s}}\right)\right)^{\frac12} \right)^{\frac{2n-\mu}{n}}.
 \end{align}}
So if we choose $\e>0$ small enough  and $u$ such that $\displaystyle\frac{4n(1+\e)\|u\|^{\frac{n}{n-s}}}{2n-\mu} \leq \alpha_{n,s}$ then using the fractional Trudinger-Moser inequality \eqref{TM-ineq} in \eqref{kc-MP1}, we obtain
\begin{align*}
\int_{\Om}\left(\int_\Om \frac{G(y,u)}{|x-y|^{\mu}}dy\right)G(x,u)~dx &\leq C_2(\e) \left( \|u\|^{\frac{2n\alpha}{2n-\mu}}  +  \|u\|^{\frac{2rn}{2n-\mu}}
 \right)^{\frac{2n-\mu}{n}}\\
 & \leq C_3(\e) \left( \|u\|^{2\alpha}  +  \|u\|^{2r} \right).
 \end{align*}
 Using \eqref{cnd-M1} and above estimate, we have
\begin{align*}
J(u) &\geq \frac{s}{n}\hat M(1) \|u\|^{\frac{\gamma n}{s}}-   C_3(\e) \left( \|u\|^{2 \alpha}  +  \|u\|^{2r}
 \right),
\end{align*}
when $\|u\|\leq 1$. Choosing $\alpha>\frac {\gamma n}{2s}$, $r>\frac{\gamma n}{2s}$ and $\rho>0$ such that $\rho<\min\left\{1,\left(\frac{\alpha_{n,s}(2n-\mu)}{4n(1+\e)}\right)^{\frac{n-s}{n}}\right\}$ we obtain $J(u) \geq \sigma >0$ for all $u\in X_0$ with $\|u\|=\rho$ and for some $\sigma>0$ depending on $\rho$.\\

\noi The condition $(g4)$ implies that  there exist some positive constants $C_1$ and $C_2$ such that
 \begin{equation}\label{new2}G(x,t) \geq C_1t^{l+1}-C_2\;\text{ for all}\; (x,t) \in \Om \times [0,\infty).
 \end{equation}
 Let $\phi \in X_0$ such that $\phi\geq  0$ and $\|\phi\|=1$ then by \eqref{new2} we obtain
\begin{align*}
\int_\Om \left(\int_\Om \frac{G(y,t \phi)}{|x-y|^{\mu}}dy\right)G(x,t \phi)~dx &\geq \int_\Om \int_\Om \frac{(C_1(t \phi)^{l+1}(y)-C_2)(C_1(t \phi)^{l+1}(x)-C_2)}{|x-y|^{\mu}}~dxdy\\
  & = C_1^2 t^{2(l+1)} \int_\Om \int_\Om \frac{\phi^{l+1}(y)\phi^{l+1}(x)}{|x-y|^\mu}~dxdy \\
  & \quad -2C_1C_2t^{l+1}\int_\Om \int_\Om\frac{\phi^{l+1}(y)}{|x-y|^\mu}~dxdy + C_2^2 \int_\Om \int_\Om \frac{1}{|x-y|^{\mu}}~dxdy.
\end{align*}
This together with \eqref{cnd-M2}, we obtain
\begin{align*}
J(t \phi) & \leq \frac{s}{n}{M}(1)\|t \phi\|^{\frac{\gamma n}{s}}- \frac{1}{2}\int_{\Om}\left( \int_\Om\frac{G(y, t \phi)}{|x-y|^{\mu}} dy\right)G(x, t \phi)~dx\\
&\leq C_3+ C_4t^{\frac{\gamma n}{s}} - C_5t^{2(l+1)}+C_6t^{l+1},
\end{align*}
where $C_i's$ are positive constants for $i=3,4,5,6$. This implies that $J(t\phi) \to -\infty$ as $t \to \infty$, since $l+1>\frac{\ga n}{2s}$. Thus there exists a $v_0\in X_0$ with $\|v_0\|> \rho$ such that $J(v_0)<0$. Therefore, $J$ satisfies Mountain Pass geometry near $0$. \QED
\end{proof}


\noi Let $\ds \Gamma=\{\gamma\in C([0,1],X_0):\gamma(0)=0,J(\gamma(1))<0\}$ and define the Mountain Pass critical level
$\ds c_*=\inf_{\gamma\in \Gamma}\max_{t\in[0,1]}J(\gamma(t))$. Then by Lemma \ref{lem7.1} and the Mountain pass theorem we know that there exists a Palais Smale sequence $\{u_k\}\subset X_0$ for $J$ at $c_*$ that is
\[J(u_k) \to c_* \; \text{and}\; J^\prime(u_k) \to 0\;\text{as}\; k \to \infty.\]
\begin{Lemma}\label{lem712}
Every Palais-Smale sequence of $J$ is bounded in $X_0$.
\end{Lemma}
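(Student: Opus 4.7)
The plan is to test the Palais--Smale condition against the combination
\[
J(u_k) - \frac{1}{2(l+1)}\langle J'(u_k), u_k\rangle
\]
for a suitably chosen coefficient that exploits both condition $(g4)$ on the nonlinearity and condition $(M2)$ on the Kirchhoff function. Choosing $\phi = u_k$ and using the symmetry of the Choquard kernel, I would first write
\[
\langle J'(u_k), u_k\rangle = M(\|u_k\|^{n/s})\|u_k\|^{n/s} - \int_\Omega \left(\int_\Omega \frac{G(y,u_k)}{|x-y|^\mu}\,dy\right) g(x,u_k)\,u_k\,dx,
\]
so that the Choquard integrals combine into
\[
\frac{1}{2(l+1)}\int_\Omega\int_\Omega \frac{G(y,u_k)\bigl[u_k(x)g(x,u_k(x)) - (l+1)G(x,u_k(x))\bigr]}{|x-y|^\mu}\,dxdy.
\]
By Remark \ref{rem2}, the bracket is nonnegative for every $x$ (and trivially zero when $u_k(x)\le 0$ since $g$ vanishes there), and since $G\ge 0$ by $(g1)$, this whole double integral is $\ge 0$.

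Next I would handle the remaining $M$-terms. Setting $t = \|u_k\|^{n/s}$, they read
\[
\frac{s}{n}\hat M(t) - \frac{1}{2(l+1)}M(t)\,t = \frac{1}{2(l+1)}\Bigl[\gamma\hat M(t) - tM(t) + \delta\,\hat M(t)\Bigr],
\]
where $\delta := \frac{2(l+1)s}{n} - \gamma > 0$ by the assumption $l > \frac{\gamma n}{2s}-1$ in $(g4)$. The bracket is then bounded below by $\delta\hat M(t)$ thanks to Remark \ref{rem-M}, i.e.\ \eqref{cnd-M}. Together with the Choquard estimate above, this yields
\[
J(u_k) - \frac{1}{2(l+1)}\langle J'(u_k), u_k\rangle \;\ge\; \frac{\delta}{2(l+1)}\,\hat M(\|u_k\|^{n/s}).
\]
The left-hand side is at most $c_\ast + o(1) + \|J'(u_k)\|_{X_0^\ast}\|u_k\| = c_\ast + o(1) + o(\|u_k\|)$ by the PS assumption.

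To finish, I would argue by contradiction, supposing $\|u_k\|\to\infty$ along a subsequence. The degenerate condition $(M3)$ gives some $\kappa>0$ and $b>0$ with $M(\tau)\ge \kappa$ for $\tau\ge b$, and integrating this yields $\hat M(\tau)\ge \tfrac{\kappa}{2}\tau$ for $\tau$ large. Inserted into the estimate above with $\tau = \|u_k\|^{n/s}$, this produces
\[
c_\ast + o(1) + o(\|u_k\|) \;\ge\; \frac{\delta\kappa}{4(l+1)}\,\|u_k\|^{n/s},
\]
and since $n/s\ge 2$, dividing by $\|u_k\|^{n/s}$ forces a positive constant to be $\le 0$, a contradiction. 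The main delicate point here is that the degenerate case allows $M$ to vanish at $0$, so one must use $(M3)$ (not, say, an AR-type condition on $M$ alone) to guarantee eventual linear growth of $\hat M$; the strict inequality $l+1 > \gamma n/(2s)$ encoded in $(g4)$ is what converts the nonnegative lower bound $\gamma \hat M(t) - tM(t)\ge 0$ into the strictly positive lower bound by a multiple of $\hat M(t)$ that drives the contradiction.
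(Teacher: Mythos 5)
Your proof is correct and takes essentially the same route as the paper: both test the Palais--Smale conditions against $J(u_k)-\frac{1}{2(l+1)}\langle J'(u_k),u_k\rangle$, use Remark \ref{rem2} (a consequence of $(g4)$) to drop the nonnegative Choquard contribution, and combine $(M2)$ with $(M3)$ to force a lower bound that grows like $\|u_k\|^{n/s}$ against an upper bound of the form $C(1+o(\|u_k\|))$, giving a contradiction since $n/s>1$ and $l+1>\frac{\gamma n}{2s}$. The only cosmetic difference is in handling the $M$-terms: you rewrite $\frac{s}{n}\hat M(t)-\frac{1}{2(l+1)}tM(t)$ so that \eqref{cnd-M} leaves a multiple of $\hat M(t)$, and then integrate $(M3)$ to get linear growth of $\hat M$, whereas the paper uses $\hat M(t)\ge \frac{1}{\gamma}M(t)t$ to land directly on a multiple of $M(t)t$ and applies $(M3)$ without the integration step.
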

\begin{proof}
Let $\{u_k\} \subset X_0$ denotes a $(PS)_c$ sequence of $J$ that is
\begin{equation*}
J(u_k) \to c \; \text{and}\; J^{\prime}(u_k) \to 0\;\text{as}\; k \to \infty
\end{equation*}
for some $c \in \mathbb{R}.$ This implies
\begin{align}\label{kc-PS-bdd1}
&\frac{s\hat M(\|u_k\|^{\frac{n}{s}})}{n} - \frac12 \int_\Om \left(\int_\Om \frac{G(y,u_k)}{|x-y|^{\mu}}dy \right)G(x,u_k)~dx \to c \; \text{as}\; k \to \infty,\notag\\
&\left| M(\|u_k\|^{\frac{n}{s}})\int_{\mb R^{n}}\int_{\mb R^n}\frac{|u_k(x)- u_k(y)|^{\frac{n}{s}-2}(u_k(x)- u_k(y))(\phi(x)-\phi(y))}{|x-y|^{2n}}{dxdy}\right.\notag\\
&\quad\quad\quad\quad\quad\quad\quad\quad\quad\quad\quad\quad\quad\left. -\int_\Om \left(\int_\Om \frac{G(y,u_k)}{|x-y|^{\mu}}dy \right)g(x,u_k)\phi ~dx \right|\leq \e_k\|\phi\|
\end{align}
where $\e_k \to 0$ as $k\to \infty$. In particular, taking $\phi=u_k$ we get
\begin{equation}\label{kc-PS-bdd2}
\left| M(\|u_k\|^{\frac{n}{s}}) \|u_k\|^{\frac{n}{s}}-\int_\Om \left(\int_\Om \frac{G(y,u_k)}{|x-y|^{\mu}}dy \right)g(x,u_k)u_k ~dx \right|\leq \e_k\|u_k\|.
\end{equation}
Now Remark \eqref{rem2} gives us that
\begin{equation}\label{kc-PS-bdd3}
(l+1) \int_\Om \left(\int_\Om \frac{G(y,u_k)}{|x-y|^{\mu}}dy \right)G(x,u_k)~dx \leq \int_\Om \left(\int_\Om \frac{G(y,u_k)}{|x-y|^{\mu}}dy \right)g(x,u_k)u_k ~dx.
\end{equation}
Then using  \eqref{kc-PS-bdd1}, \eqref{kc-PS-bdd2} along with \eqref{kc-PS-bdd3} and \eqref{cnd-M}, we get
\begin{align}\label{kc-PS-bdd4}
& J(u_k)- \frac{1}{2(l+1)}\langle J^\prime(u_k),u_k\rangle
=\frac{s}{n}\hat M(\|u_k\|^{\frac{n}{s}})- \frac{1}{2 (l+1)}M(\|u_k\|^{\frac{n}{s}})\|u_k\|^{\frac{n}{s}}\notag  \\
& \quad \quad-\frac12 \left[ \int_\Om \left(\int_\Om \frac{G(y,u_k)}{|x-y|^{\mu}}dy \right)G(x,u_k)~dx - \frac{1}{(l+1)}\int_\Om \left(\int_\Om \frac{G(y,u_k)}{|x-y|^{\mu}}dy \right)g(x,u_k)u_k ~dx\right]\notag\\
&{\geq \frac{s\hat M(\|u_k\|^{\frac{n}{s}})}{n}- \frac{M(\|u_k\|^{\frac{n}{s}})\|u_k\|^{\frac{n}{s}}}{2(l+1)}}\notag \\
& {\geq \left( \frac{s}{n \gamma }- \frac{1}{2(l+1)}\right) M(\|u_k\|^{\frac{n}{s}})\|u_k\|^{\frac{n}{s}}}.
\end{align}
To prove the Lemma, we assume by contradiction that $\{\|u_k\|\}$ is an unbounded sequence. Then without loss of generality, we can assume that, up to a subsequence, $\|u_k\| \to \infty$ and $\|u_k\|\geq \alpha >0$ for some $\alpha$ and for all $k$.
This along with \eqref{kc-PS-bdd4} and $(M3)$ gives us
\begin{equation}\label{kc-PS-bdd6}
J(u_k)- \frac{1}{2(l+1)}\langle J^\prime(u_k),u_k\rangle\geq  \left( \frac{s}{n \gamma}- \frac{1}{2 (l+1)}\right) \kappa\|u_k\|^{\frac{n}{s}}
\end{equation}
where $\kappa$ depends on $\alpha$. Also from \eqref{kc-PS-bdd1} and \eqref{kc-PS-bdd2} it follows that
\begin{equation}\label{kc-PS-bdd5}
J(u_k)- \frac{1}{2(l+1)}\langle J^\prime(u_k),u_k\rangle \leq C \left( 1+ \e_k \frac{\|u_k\|}{2{(l+1)}}\right)
\end{equation}
for some constant $C>0$. Therefore from \eqref{kc-PS-bdd6} and \eqref{kc-PS-bdd5} we get that
\[ \left( \frac{s}{n \gamma }- \frac{1}{2 (l+1)}\right)\kappa \|u_k\|^{\frac{n}{s}}  \leq C \left( 1+ \e_k \frac{\|u_k\|}{2{(l+1)}}\right)\]
which gives a contradiction because $l+1>\frac{\gamma n}{2s}$ and $\frac{n}{s}>1$. This implies that $\{u_k\}$ must be bounded in $X_0$. \hfill{\QED}
\end{proof}
Assume that $0\in \Omega$ and $\rho>0$ be such that $B_\rho(0)\subset \Om$. Then for $x \in \mb R^n$, we define $w_k(x):= \tilde{w}_{k}\left(\frac{x}{\rho}\right)$, where $\tilde w_k$ is same as \eqref{5.2} then supp$(w_k) \in B_\rho(0)\subset \Omega$. We note that  $w_k\in W_{0}^{s,\frac{n}{s}}(\mb R^n)$ and by \eqref{mf}, we have
\begin{equation}\label{MF-limit}
    \lim_{k \to \infty}\|w_k\|^{\frac{n}{s}}= \lim_{k \to \infty} \int_{\mb R^{n}}\int_{\mb R^n}\frac{|\tilde w_k(x)-\tilde w_k(y)|^{\frac{n}{s}}}{|x-y|^{2n}}dx dy = \gamma_{n,s}.
\end{equation}
Next, we use $w_k$'s efficiently  to obtain the following bound on $c_*$.
\begin{Lemma}\label{lem7.2}
It holds that
\[\ds 0<c_*<\frac{s}{n}\hat M\left(\left(\frac{2n-\mu}{2n}\alpha_{n,s}\right)^{\frac{n-s}{s}}\right).\]
\end{Lemma}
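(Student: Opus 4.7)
The plan is to establish $c_*>0$ routinely from the Mountain Pass geometry of Lemma \ref{lem7.1}, and to prove the sharp upper bound by constructing an explicit admissible path along the rescaled Moser functions $t\mapsto tw_k$ whose $J$-maximum stays below the threshold for some sufficiently large $k$. Since every $\gamma\in\Gamma$ from $0$ to $v_0$ with $\|v_0\|>\rho$ must cross the sphere $\{\|u\|=\rho\}$ on which $J\ge\sigma>0$ by Lemma \ref{lem7.1}, one immediately has $c_*\ge\sigma>0$. For the upper bound, the argument from Lemma \ref{lem7.1} shows $J(tw_k)\to-\infty$ as $t\to\infty$, so after a trivial reparametrization $c_*\le\max_{t\ge 0}J(tw_k)$. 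Setting $C^*:=\tfrac{s}{n}\hat M\!\left(\bigl(\tfrac{2n-\mu}{2n}\alpha_{n,s}\bigr)^{(n-s)/s}\right)$, I would argue by contradiction: assume $\max_{t\ge 0}J(tw_k)\ge C^*$ for every $k$, with the maximum attained at some $t_k>0$ (existence from $J(0)=0$, continuity, and $J(tw_k)\to-\infty$).

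From $J(t_kw_k)\ge C^*$ and nonnegativity of the Choquard double integral (consequence of $(g1)$), one obtains $\hat M(t_k^{n/s}\|w_k\|^{n/s})\ge\hat M\!\bigl((\tfrac{2n-\mu}{2n}\alpha_{n,s})^{(n-s)/s}\bigr)$; since $(M3)$ forces $M>0$ on $(0,\infty)$, $\hat M$ is strictly increasing, whence $t_k^{n/s}\|w_k\|^{n/s}\ge(\tfrac{2n-\mu}{2n}\alpha_{n,s})^{(n-s)/s}$. Combined with \eqref{MF-limit} and the identity $\alpha_{n,s}^*=n\,\gamma_{n,s}^{s/(n-s)}$ recalled from the introduction, this yields
\[
\liminf_k \beta t_k^{n/(n-s)} \ge \beta\,\frac{(2n-\mu)\alpha_{n,s}}{2\alpha_{n,s}^*} > 2n-\mu
\]
by the hypothesis $\beta>2\alpha_{n,s}^*/\alpha_{n,s}$, so there is $\delta>0$ with $\beta t_k^{n/(n-s)}-(2n-\mu)\ge\delta$ for $k$ large. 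In particular $\liminf t_k>0$, hence $c_k:=t_k|\log k|^{(n-s)/n}\to\infty$.

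Next, I exploit the criticality $\tfrac{d}{dt}J(tw_k)|_{t=t_k}=0$ which, after multiplication by $t_k$ and using the symmetry of the Riesz kernel, reads
\[
M(t_k^{n/s}\|w_k\|^{n/s})\,t_k^{n/s}\|w_k\|^{n/s} = \int_{\Omega}\!\int_{\Omega}\frac{G(y,t_kw_k)\,g(x,t_kw_k)\,t_kw_k(x)}{|x-y|^\mu}\,dx\,dy.
\]
On $B_{\rho/k}(0)\times B_{\rho/k}(0)$, $w_k$ equals the constant $|\log k|^{(n-s)/n}$, so $t_kw_k\equiv c_k$ on this square. Combining the AM-GM bound
\[
g(x,c_k)G(y,c_k)+g(y,c_k)G(x,c_k)\ge 2\sqrt{g(x,c_k)G(x,c_k)\,g(y,c_k)G(y,c_k)}
\]
with the consequence of \eqref{h-growth} that $g(z,c_k)G(z,c_k)\ge (A/c_k)e^{\beta c_k^{n/(n-s)}}$ uniformly in $z\in\overline\Omega$ for any prescribed $A>0$ once $k$ is large, together with the symmetry of $|x-y|^{-\mu}$, the scaling identity $\int_{B_{\rho/k}\times B_{\rho/k}}|x-y|^{-\mu}\,dx\,dy=C_\mu(\rho/k)^{2n-\mu}$, and $e^{\beta c_k^{n/(n-s)}}=k^{\beta t_k^{n/(n-s)}}$, yields
\[
M(t_k^{n/s}\|w_k\|^{n/s})\,t_k^{n/s}\|w_k\|^{n/s}\ge A\,C_\mu\,\rho^{2n-\mu}\,k^{\beta t_k^{n/(n-s)}-(2n-\mu)}\ge A\,C_\mu\,\rho^{2n-\mu}\,k^\delta.
\]

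A case analysis on $\{t_k\}$ closes the argument: if $t_k$ stays bounded, the left side is bounded by continuity of $M$ while the right tends to infinity with $k$, contradiction; if $t_k\to\infty$ on a subsequence, $(M2)$ gives $M(T)T\le M(1)T^\gamma$ for $T\ge 1$ so the left grows only polynomially in $t_k$, whereas for any fixed $k\ge 2$ the right grows super-polynomially in $t_k$, again a contradiction. Hence $c_*<C^*$. The main obstacle is precisely this Choquard lower bound: \eqref{h-growth} controls only the \emph{diagonal} product $tg(x,t)G(x,t)$, while the Choquard integrand involves the off-diagonal cross term $g(x,\cdot)G(y,\cdot)$, so the AM-GM symmetrization is the key new ingredient. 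The sharp threshold $\beta>2\alpha_{n,s}^*/\alpha_{n,s}$ is calibrated exactly so that $\beta t_k^{n/(n-s)}-(2n-\mu)$ is strictly positive in the limit, through the identification $\alpha_{n,s}^*=n\gamma_{n,s}^{s/(n-s)}$ linking the Moser norm limit to the sharp Trudinger-Moser exponent.
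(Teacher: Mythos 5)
Your proof follows the same overall strategy as the paper: construct the rescaled Moser functions $w_k$, assume for contradiction $\max_{t\ge 0}J(tw_k)\ge C^*$ at some $t_k$, use criticality and $(g1)$ positivity to get $\|t_kw_k\|^{n/s}\ge\bigl(\tfrac{2n-\mu}{2n}\alpha_{n,s}\bigr)^{(n-s)/s}$, localize the Choquard integral to $B_{\rho/k}\times B_{\rho/k}$ where $t_kw_k$ is constant, combine with the Riesz kernel estimate $\int\!\!\int|x-y|^{-\mu}\ge C_{\mu,n}(\rho/k)^{2n-\mu}$, and let the hypothesis $\ba>2\alpha_{n,s}^*/\alpha_{n,s}$ together with $\alpha_{n,s}^*=n\gamma_{n,s}^{s/(n-s)}$ drive the right side to infinity.

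What you add, and the paper elides, are two genuine rigour improvements. First, the AM--GM symmetrization: hypothesis \eqref{h-growth} only controls the diagonal product $t\,g(x,t)G(x,t)$, while the Choquard integrand after localization is the cross term $g(x,c_k)G(y,c_k)$; the paper passes directly from \eqref{kc-PS-cond3} to \eqref{PS-con} as though these were interchangeable, which is only automatic if $g,G$ are $x$-independent. Your symmetrization $g(x,c_k)G(y,c_k)+g(y,c_k)G(x,c_k)\ge 2\sqrt{g(x,c_k)G(x,c_k)\,g(y,c_k)G(y,c_k)}$ (valid because everything is nonnegative by $(g1)$, and the Riesz kernel is symmetric) reduces cleanly to the diagonal bound and closes this gap. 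Second, the paper asserts that $\{t_k\}$ is bounded merely because $J(tw_k)\to-\infty$ as $t\to\infty$ for each fixed $k$, which does not give uniformity in $k$; your case analysis ($t_k$ bounded $\Rightarrow$ LHS bounded by continuity of $M$ vs.\ $t_k\to\infty$ $\Rightarrow$ LHS $\le M(1)T^\gamma$ polynomial by $(M2)$ while RHS is super-polynomial since $k\ge 2$) handles both alternatives and makes the contradiction airtight. Both choices are improvements; the underlying path construction, threshold calibration, and localization trick are identical to the paper's.
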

\begin{proof} Using Lemma \ref{lem7.1}, we deduce that $c_*>0$ and $J(t\phi) \ra -\infty$ as $t\ra \infty$ if $0\leq \phi \in X_0\setminus\{0\}$ with $\|\phi\|=1$. Also by definition of $c_*$, we have  $c_* \leq \max\limits_{t\in [0,1]}J(t\phi)$ for each non negative $\phi \in X_0\setminus \{0\}$ with $J(\phi)<0$ {which} assures that it is enough to prove that there exists a non negative $w \in X_0\setminus \{0\}$ such that
\[\max_{t\in[0,\infty)} J(tw) < \frac{s}{n}\hat M\left(\left( \frac{2n-\mu}{2n}\alpha_{n,s}\right)^{\frac{n-s}{s}}\right).\]
To prove this, we consider the sequence of non negative functions $\{w_k\}$(defined before this Lemma) and claim that there exists a $k \in \mb N$ such that
\[\max_{t\in[0,\infty)} J(tw_k) < \frac{s}{n}\hat M\left(\left( \frac{2n-\mu}{2n}\alpha_{n,s}\right)^{\frac{n-s}{s}}\right).\]
Suppose this is not true, then for all $k \in \mb N$ there exists a $t_k>0$ such that
\begin{equation}\label{kc-PScond0}
\begin{split}
&\max_{t\in[0,\infty)} J(tw_k) = J(t_kw_k) \geq \frac{s}{n} \hat M\left(\left( \frac{2n-\mu}{2n}\alpha_{n,s}\right)^{\frac{n-s}{s}}\right)\\
& \text{and}\;  \frac{d}{dt}(J(tw_k))|_{t=t_k}=0.
\end{split}
\end{equation}
From the proof of {Lemma \ref{lem7.1}}, $J(t w_k)\to -\infty$ as $t\to \infty$ for each $k$. Then we infer that $\{t_k\}$ must be a bounded sequence in $\mb R$ which implies that there exists a $t_0$ such that, up to a subsequence which we still denote by $\{t_k\}$, $t_k \to t_0$ as $k \to \infty$. From \eqref{kc-PScond0} and definition of $J(t_kw_k)$ we obtain
\begin{equation}\label{kc-PScond1}
\frac{s}{n} \hat M\left(\left( \frac{2n-\mu}{2n}\alpha_{n,s}\right)^{\frac{n-s}{s}}\right) < \frac{s}{n}\hat M(\|t_kw_k\|^{\frac{n}{s}}).
\end{equation}
Since $\hat M$ is monotone increasing, from \eqref{kc-PScond1} we get that
\begin{equation}\label{kc-PScond2}
\|t_kw_k\|^{\frac{n}{s}} \geq \left( \frac{2n-\mu}{2n}\alpha_{n,s}\right)^{\frac{n-s}{s}}.
\end{equation}
From \eqref{kc-PScond2} and since \eqref{MF-limit} holds, we infer that
\begin{equation}\label{kc-PScond3}
t_k(\log k)^{\frac{n-s}{n}} \to \infty \;\text{as}\; k \to \infty.
\end{equation}
Furthermore from \eqref{kc-PScond0},  we have
\begin{equation}\label{kc-PS-cond3}
\begin{split}
M(\|t_kw_k\|^{\frac{n}{s}})\|t_kw_k\|^{\frac{n}{s}} &= \int_\Om \left(\int_\Om \frac{G(y,t_kw_k)}{|x-y|^{\mu}}dy\right)g(x,t_kw_k)t_kw_k ~dx\\
& \geq \int_{B_{\rho/k}}g(x,t_kw_k)t_kw_k \int_{B_{\rho/k}}\frac{G(y,t_kw_k)}{|x-y|^\mu}~dy~dx.
\end{split}
\end{equation}
 In addition, as in equation $(2.11)$ p. 1943 in \cite{YangJDE}, it is easy to get that
\[\int_{B_{\rho/k}}\int_{B_{\rho/k}} \frac{~dxdy}{|x-y|^\mu} \geq C_{\mu, n} \left(\frac{\rho}{k}\right)^{2n-\mu},\]
where $C_{\mu, n}$ is a positive constant depending on $\mu$ and $n$. From \eqref{h-growth}, {it is easy to deduce that for $\ba>\frac{2 \alpha_{n,s}^*}{\al_{n,s}}$ and for each $d>0$ there exists a $r_d\in \mb N$ }such that
\[rg(x,r)G(x,r) \geq d \exp\left( \beta|r|^{\frac{n}{n-s}}\right)\; \text{whenever}\; r \geq r_d.\]
Since \eqref{kc-PScond3} holds, we can choose a $N_d \in \mb N$ such that
\[t_k(\log k)^{\frac{n-s}{n}} \geq r_d\; \text{for all}\; k\geq N_d.\]
 Using these estimates in \eqref{kc-PS-cond3} and from \eqref{kc-PScond2}, for $d$ large enough
  we get that
\begin{align}\label{PS-con}
M(\|t_kw_k\|^{\frac{n}{s}})\|t_kw_k\|^{\frac{n}{s}}
&  \geq d \exp \left(\beta t_k^{\frac{n}{n-s}}|\log k|\right)C_{\mu,n}\left(\frac{\rho}{k}\right)^{2n-\mu}\nonumber\\
& = d C_{\mu,n}\rho^{2n-\mu} \exp\left(\left(\beta t_k^{\frac{n}{n-s}}-(2n-\mu)\right)\log k\right)\\
& \geq d C_{\mu,n}\rho^{2n-\mu} \exp \left( \log k \left( \frac{(2n-\mu)\beta \alpha_{n,s}}{2n\|w_k\|^{\frac{n}{n-s}}} - (2n-\mu)\right)\right)\nonumber
\end{align}
Since $\beta > \frac{2\alpha_{n,s}^*}{\alpha_{n,s}}=\frac{2n \gamma_{n,s}^{\frac{s}{n-s}}}{\alpha_{n,s}}$ and \eqref{MF-limit} hold, the R.H.S. of \eqref{PS-con} tends to $+\infty$ as $k \to \infty$. Whereas from continuity of $M$ it follows that
\[\lim_{k \to \infty} M\left(\|t_kw_k\|^{\frac{n}{s}}\right)\|t_kw_k\|^{\frac{n}{s}} = M\left(t_0^{\frac{n}{s}}\gamma_{n,s}\right)(t_0^{\frac{n}{s}}\gamma_{n,s}),\]
which is a contradiction. This establishes our claim and {we} conclude the proof of Lemma.
\QED
\end{proof}

\noi In order to prove that a Palais-Smale sequence converges to a weak solution of problem ($\mc M$), we need  the following  convergence Lemma. The idea of proof is borrowed from Lemma 2.4 in  \cite{YangJDE}.
\begin{Lemma}\label{wk-sol}
If $\{u_k\}$ is a Palais Smale sequence for $J$ at $c$ then there exists a $u \in X_0$ such that, up to a subsequence.
\begin{equation}\label{wk-sol1}
\left(\int_\Om \frac{G(y,u_k)}{|x-y|^{\mu}}~dy\right)G(x,u_k) \to \left(\int_\Om \frac{G(y,u)}{|x-y|^{\mu}}~dy\right)G(x,u) \; \text{in}\; L^1(\Om)
\end{equation}
\end{Lemma}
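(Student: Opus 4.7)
The plan is to reduce the claimed $L^1$-convergence to an $L^t$-convergence of $G(\cdot, u_k)$ with $t := \frac{2n}{2n-\mu}$ via the Hardy--Littlewood--Sobolev inequality, and then establish this $L^t$-convergence through Vitali's theorem after securing equi-integrability via a critical exponential estimate. First I would invoke Lemma \ref{lem712} to get $\{u_k\}$ bounded in $X_0$. Reflexivity of $X_0$ together with the compact embeddings $X_0 \hookrightarrow\hookrightarrow L^q(\Om)$, $q \in [1,\infty)$, produces (up to a subsequence, not relabelled) $u_k \rightharpoonup u$ in $X_0$, $u_k \to u$ strongly in every $L^q(\Om)$, and $u_k \to u$ a.e.\ in $\Om$. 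Continuity of $G$ then yields $G(x,u_k) \to G(x,u)$ a.e.

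Splitting
\begin{equation*}
\left(\int_\Om \frac{G(y,u_k)}{|x-y|^\mu}\,dy\right) G(x,u_k) - \left(\int_\Om \frac{G(y,u)}{|x-y|^\mu}\,dy\right) G(x,u) = \mc{A}_k(x) + \mc{B}_k(x),
\end{equation*}
with $\mc{A}_k = \bigl(\int_\Om G(y,u_k)/|x-y|^\mu\,dy\bigr)\bigl(G(x,u_k)-G(x,u)\bigr)$ and $\mc{B}_k = \bigl(\int_\Om (G(y,u_k)-G(y,u))/|x-y|^\mu\,dy\bigr) G(x,u)$, Proposition \ref{HLS} bounds $\|\mc{A}_k\|_{L^1(\Om)}$ and $\|\mc{B}_k\|_{L^1(\Om)}$ by $C(n,\mu)$ times products of the $L^t$-norms of $G(\cdot, u_k)$, $G(\cdot, u)$, and $G(\cdot, u_k) - G(\cdot, u)$. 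Hence the task reduces to showing that $\{G(\cdot, u_k)\}$ is bounded in $L^t(\Om)$ and $G(\cdot, u_k) \to G(\cdot, u)$ in $L^t(\Om)$; by Vitali's theorem and the a.e.\ convergence above, both follow from equi-integrability of $\{|G(\cdot, u_k)|^t\}$.

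To establish equi-integrability, I would use the growth bound \eqref{k1}: its polynomial part $|u_k|^{\alpha t}$ is equi-integrable by the strong $L^q$ convergence, while its exponential part $|u_k|^{rt}\exp((1+\e)t|u_k|^{n/(n-s)})$ is handled by H\"older's inequality coupled to a uniform bound of the form $\sup_k \int_\Om \exp\bigl(p|u_k|^{n/(n-s)}\bigr)\,dx < \infty$ for some $p$ strictly larger than $(1+\e)t$. To produce this bound I would set $v_k := u_k/\|u_k\|$ (legitimate since $\|u_k\|$ is bounded below by the assumption $c_* > 0$) so that $\|v_k\| = 1$ and apply Lemma \ref{plc} to $\{v_k\}$, which converges weakly to some $v$ up to a subsequence. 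The main obstacle is calibration: Lemma \ref{plc} only permits $p < \alpha_{n,s}(1-\|v\|^{n/s})^{-s/(n-s)}$ after rescaling, so one must control $\|u_k\|^{n/(n-s)}$ from above. This is precisely where Lemma \ref{lem7.2} is essential: combining the Palais--Smale identities with the estimate $M(\tau)\tau \leq \gamma \hat M(\tau)$ from Remark \ref{rem-M} yields
\begin{equation*}
\Bigl(\frac{s}{n} - \frac{\gamma}{2(l+1)}\Bigr) \hat M\bigl(\|u_k\|^{n/s}\bigr) \leq J(u_k) - \frac{1}{2(l+1)}\langle J'(u_k), u_k\rangle \longrightarrow c_*,
\end{equation*}
and the strict bound $c_* < \frac{s}{n}\hat M\bigl((\frac{2n-\mu}{2n}\alpha_{n,s})^{(n-s)/s}\bigr)$ from Lemma \ref{lem7.2}, together with the improvement factor $(1-\|v\|^{n/s})^{-s/(n-s)} > 1$ whenever $u \not\equiv 0$ (and the plain Trudinger--Moser inequality \eqref{TM-ineq} when $u \equiv 0$), keeps the calibration feasible once $\e$ is chosen small enough. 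Vitali's theorem then delivers the $L^t$-convergence, and the HLS reduction of the second paragraph closes the argument.
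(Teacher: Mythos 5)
The proposal diverges from the paper's proof at the crucial step, and the divergence opens a genuine gap. The paper proves the $L^1$-convergence by a measure-theoretic truncation: using $(g3)$ and, above all, the \emph{second} uniform bound in \eqref{wk-sol10}, namely $\int_\Om \bigl(\int_\Om \frac{G(y,u_k)}{|x-y|^\mu}dy\bigr)g(x,u_k)u_k\,dx \leq C$, the integrals over $\{u_k \geq M\}$ (outer) and $\{u_k \geq K\}$ (inner) are made uniformly small, and the remaining bounded region is handled by a dominated-convergence argument with an explicit $L^1$ majorant built from \eqref{k1} with $r=\alpha$. You instead attempt Vitali's theorem after securing equi-integrability of $|G(\cdot,u_k)|^t$ via the uniform exponential estimate $\sup_k \int_\Om \exp(p|u_k|^{n/(n-s)})<\infty$ for some $p$ slightly larger than $\frac{2n}{2n-\mu}$, coming from Lemma \ref{plc}. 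This is where the argument breaks.

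Concretely, your Palais--Smale estimate produces, for $c = c_*$, only $\hat M(\|u_k\|^{n/s}) \lesssim \frac{s/n}{s/n - \gamma/(2(l+1))}\,\hat M\bigl((\tfrac{2n-\mu}{2n}\alpha_{n,s})^{(n-s)/s}\bigr) =: \kappa\,\hat M(\tau_0)$ with $\kappa > 1$. The Lions/Trudinger--Moser calibration requires, roughly, $\|u_k\|^{n/s} < \tau_0\,(1-\|v\|^{n/s})^{-1}$. The multiplicative loss $\kappa$ is structural (it blows up as $l+1 \downarrow \tfrac{\gamma n}{2s}$) and cannot be removed by taking $\e$ small. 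When the weak limit $v$ is zero or small, the improvement factor $(1-\|v\|^{n/s})^{-1}$ is essentially $1$ and cannot absorb $\kappa$; yet the lemma must hold even when $u\equiv 0$, since it is invoked precisely to decide whether $u_0\equiv 0$ in Claim~1 of the main proof. Furthermore, the lemma is stated for an arbitrary PS level $c$, so your appeal to Lemma \ref{lem7.2} (a bound on $c_*$ only) is not available, and for a general bounded PS sequence no bound of the form $\|u_k\|^{n/s} < \tau_0(1-\|v\|^{n/s})^{-1}$ is true. The paper does use a Lions-type calibration elsewhere (Claim~3 of the theorem), but there it relies on the extra input $J(u_0)\geq 0$ from Claim~2 and on the subadditivity $(M1)$ to split $\hat M^{-1}$, neither of which you have at this stage. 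To close Lemma \ref{wk-sol} you need an argument, like the paper's truncation, that leans on the $L^1$-bound for $\int\bigl(\int\frac{G(y,u_k)}{|x-y|^\mu}dy\bigr)g(x,u_k)u_k\,dx$ rather than on a sharp exponential estimate for $u_k$.
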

\begin{proof}
From Lemma \ref{lem712}, we know that the sequence $\{u_k\}$ must be bounded in $X_0$. Consequently, up to a subsequence, there exists a $u\in X_0$ such that $u_k \rightharpoonup u$ weakly in $X_0$ and strongly in $L^q(\Om)$ for any $q \in [1,\infty)$ as $k \to \infty$. Also, still up to a subsequence, we can assume  that $u_k(x) \to u(x)$ pointwise a.e. for $ x \in \Om$.

From \eqref{kc-PS-bdd1}, \eqref{kc-PS-bdd2} and \eqref{kc-PS-bdd3} we get that there exists a constant $C>0$ such that
\begin{equation}\label{wk-sol10}
\begin{split}
\int_\Om \left(\int_\Om\frac{G(y,u_k)}{|x-y|^\mu}dy\right)G(x,u_k)~dx &\leq C,\\
\int_\Om \left(\int_\Om\frac{G(y,u_k)}{|x-y|^\mu}dy\right)g(x,u_k)u_k~dx & \leq C.
\end{split}
\end{equation}
Now, it is well known that if $f\in L^1(\Om)$ then for any $\e>0$ there exists a $\delta>0$ such that
\[\left| \int_{U} f(x)~dx\right| <\e,\]
for any measurable set $U\subset \Om$ with $|U|\leq \delta$.
Also $f\in L^{1}(\Om)$ implies that for any fixed $\delta>0$ there exists $M>0$ such that
\[|\{x\in \Om : |f(x)|\geq M\}|\leq \delta.\]
Now using \eqref{wk-sol10}, we have
$$\left(\displaystyle\int_\Om\frac{G(y,u_k)}{|x-y|^\mu}dy\right)G(\cdot,u_k) \in L^{1}(\Om)$$
and also by \eqref{k2}
$$\left(\displaystyle\int_\Om\frac{G(y,u)}{|x-y|^\mu}dy\right)G(\cdot,u) \in L^{1}(\Om).$$
Now we fix $\de>0$ and  choose $M> \max\left\{\left(\frac{C T_0}{\de}\right)^{\frac{1}{\ga_0+1}}, t_0\right\}$. Then we use $(g3)$  to obtain
\begin{align*}
 \int_{\Om \cap\{u_k \geq M\}} &\left(\int_{\Omega} \frac{G(y,u_k)}{|x-y|^{\mu}} dy \right)G(x,u_k)  ~dx \leq T_0 \int_{\Om \cap\{u_k \geq M\}} \left(\int_{\Omega} \frac{G(y,u_k)}{|x-y|^{\mu}}dy\right) \frac{g(x,u_k)}{u_{k}^{\gamma_0}}  ~dx\\
 &\leq \frac{T_0}{ M^{\gamma_0+1}}\int_{\Om \cap\{u_k \geq M\}} \left(\int_{\Omega} \frac{G(y,u_k)}{|x-y|^{\mu}} dy\right) g(x,u_k)u_{k}  ~dx<\de.
\end{align*}
Next we consider
\begin{align*}
&\left| \int_{\Omega} \left(\int_{\Omega}\frac{G(y,u_k)}{|x-y|^{\mu}} dy \right) G(x,u_k) ~dx-  \int_{\Omega} \left(\int_{\Omega}\frac{G(y,u )}{|x-y|^{\mu}} dy \right) G(x,u) ~dx\right|\\
&\leq 2\de+ \left|\int_{\Om \cap \{u_k \leq M\}} \left(\int_{\Omega}\frac{G(y,u_k)}{|x-y|^{\mu}} dy \right) G(x,u_k) ~dx-  \int_{\Omega\cap \{u \leq M\}} \left(\int_{\Omega}\frac{G(y,u )}{|x-y|^{\mu}} dy \right) G(x,u) ~dx\right|
\end{align*}
To prove the result, it is enough to establish that as $k \to \infty$
\begin{align}
\int_{\Om \cap \{u_k \leq M\}} \left(\int_{\Omega}\frac{G(y,u_k)}{|x-y|^{\mu}} dy \right) G(x,u_k)~dx \to \int_{\Om \cap \{u \leq M\}} \left(\int_{\Omega}\frac{G(y,u )}{|x-y|^{\mu}} dy \right) G(x,u) ~dx.
\end{align}
Since $\left(\displaystyle\int_\Om\frac{G(y,u)}{|x-y|^\mu}dy\right)G(\cdot,u) \in L^{1}(\Om)$, so by Fubini's theorem we get
\begin{align*}
&\lim_{K \to \infty} \int_{\Om \cap\{u\leq M\}}\left(\int_{\Om\cap\{u\geq K\}}\frac{G(y,u)}{|x-y|^{\mu}}dy\right)G(x,u)~dx\\
&= \lim_{K \to \infty} \int_{\Om \cap\{u\geq K\}}\left(\int_{\Om \cap\{u\leq M\}}\frac{G(y,u)}{|x-y|^{\mu}}dy\right)G(x,u)~dx=0.
\end{align*}
Thus we can fix a  $K> \max\left\{\left(\frac{C T_0}{\de}\right)^{\frac{1}{\ga_0+1}}, t_0\right\}$  such that
\[\int_{\Om \cap\{u \leq M\}} \left(\int_{\Om \cap\{u\geq K\} }\frac{G(y,u )}{|x-y|^{\mu}} dy \right) G(x,u) ~dx \leq \delta.\]
From $(g3)$, we get
\begin{align*}
&\int_{\Om \cap\{u_k \leq M\}} \left(\int_{\Om \cap\{u_k\geq K\}}\frac{G(y,u_k )}{|x-y|^{\mu}} dy \right) G(x,u_k) ~dx\\
& \leq\frac{1}{K^{\gamma_0+1}}\int_{\Om \cap \{u_k \leq M\}} \left(\int_{\Om \cap\{u_k\geq K\}}\frac{u_k^{\gamma_0+1}(y)G(y,u_k )}{|x-y|^{\mu}} dy \right) G(x,u_k) ~dx\\
&\leq \frac{T_0}{K^{\ga_{0}+1}} \int_{\Om \cap\{u_k \leq M\}} \left(\int_{\Om \cap\{u_k\geq K\} }\frac{ u_k(y) g(y,u_k )}{|x-y|^{\mu}} dy \right) G(x,u_k) ~dx\\
&\leq \frac{T_0}{K^{\ga_{0}+1}} \int_{\Om} \left(\int_{\Om }\frac{G(y,u_k )}{|x-y|^{\mu}} dy \right) g(x,u_k) u_k ~dx\leq \de.
\end{align*}
Thus we have proved that
\begin{align*}
    &\left|\int_{\Om \cap\{u \leq M\}} \left(\int_{\Om \cap\{u\geq K\} }\frac{G(y,u )}{|x-y|^{\mu}} dy \right) G(x,u) ~dx\right.\\
    &\quad \quad \left.- \int_{\Om \cap\{u_k \leq M\}} \left(\int_{\Om \cap\{u_k\geq K\} }\frac{G(y,u_k)}{|x-y|^{\mu}} dy \right) G(x,u_k) ~dx\right|\leq 2\de
\end{align*}
Finally, { to complete the proof of Lemma, we need to verify that as $k\ra \infty$}
\begin{equation}\label{choq-new}
\begin{split}
&\left|\int_{\Om\cap\{u_k \leq M\}} \left(\int_{\Om \cap\{u_k\leq K\}}\frac{G(y,u_k)}{|x-y|^{\mu}} dy \right) G(x,u_k) ~dx-\right.\\
&\quad \left.\int_{\Om \cap\{u \leq M\}} \left(\int_{\Om \cap\{u\leq K\}}\frac{G(y,u )}{|x-y|^{\mu}} dy \right) G(x,u) ~dx \right|\ra 0
\end{split}
\end{equation}
for fixed positive $K$ and $M$. It is easy to see that
\begin{align*}
\left(\int_{\Om \cap\{u_k\leq K\} }\frac{G(y,u_k)}{|x-y|^{\mu}} dy \right) G(x,u_k)\chi_{ \Om \cap \{u_k\leq M\}}  &\ra \left(\int_{\Om \cap \{u\leq K\} }\frac{G(y,u)}{|x-y|^{\mu}} dy \right) G(x,u)\chi_{\Om \cap \{u \leq M\}}
\end{align*}
pointwise a.e. as $k \to \infty$. Now choose $r=\alpha$ in \eqref{k1}, which gives us that there exist a constant $C_{M,K}>0$ depending on $M$ and $K$ such that
\begin{align*}
 &\int_{\Om \cap \{u_k\leq M\}}\left( \int_{\Om \cap \{u_k\leq K\} }\frac{G(y,u_k)}{|x-y|^{\mu}} dy \right)  G(x,u_k)dx  \\
   &\leq  C_{M,K}\int_{\Om \cap \{u_k\leq M\}}\left( \int_{\{u_k\leq K\} }\frac{|u_k(y)|^{r}}{|x-y|^{\mu}} dy \right)  |u_k(x)|^{r} dx \\
   & \leq C_{M,K} \int_\Om\int_{\Om }\left(\frac{|u_k(y)|^{r}}{|x-y|^{\mu}}~dy  \right) |u_k(x)|^{r} ~dx\\
   & \leq { {C_{M,K}C(n,\mu)\|u_k\|_{L^{\frac{2nr}{2n-\mu}}(\Om)}^{2r} \to C_{M,K}C(n,\mu)\|u\|_{L^{\frac{2nr}{2n-\mu}}(\Om)}^{2r}}} \; \text{as}\; k \to \infty,
\end{align*}
where we used the Hardy-Littlewood-Sobolev inequality in the last inequality and then used the fact that $u_k \to u$ strongly in $L^q(\Om)$ for each $q \in [1,\infty)$. This implies that, using Theorem $4.9$ of \cite{Brezis-book}, there exists a constant $h \in L^1(\Om)$ such that, up to a subsequence, for each $k$
\[\left|\left( \int_{\Om \cap\{u_k\leq K\} }\frac{G(y,u_k)}{|x-y|^{\mu}} dy \right)  G(x,u_k)\chi_{ \Om \cap \{u_k\leq M\}} \right| \leq |h(x)|\]
This helps us to employ the Lebesgue dominated convergence theorem and conclude \eqref{choq-new}.  \hfill{\QED}
\end{proof}

\begin{Lemma}\label{PS-ws}
Let $\{u_k\}\subset X_{0}$ be a Palais Smale sequence of $J$. Then there exists a $u \in X_0$ such that, up to a subsequence, for all $\phi\in X_0 $
\begin{equation}\label{PS-wk0}
\int_{\Om}\left(\int_\Om \frac{G(y,u_k)}{|x-y|^{\mu}}dy\right)g(x,u_k)\phi~ dx  \to \int_{\Om}\left(\int_\Om \frac{G(y,u)}{|x-y|^{\mu}}dy\right)g(x,u)\phi~dx \; \text{as}\; k \to \infty\;.
\end{equation}
\end{Lemma}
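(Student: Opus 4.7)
The plan is to reduce the problem to test functions $\phi \in C_c^\infty(\Om)$ via a density argument, and then to adapt the splitting strategy of Lemma \ref{wk-sol}. By Lemma \ref{lem712}, $\{u_k\}$ is bounded in $X_0$, so up to a subsequence $u_k \rightharpoonup u$ weakly in $X_0$, $u_k \to u$ strongly in $L^q(\Om)$ for every $q \in [1,\infty)$, and $u_k \to u$ pointwise a.e. Writing the Palais-Smale condition tested against $\phi \in X_0$ and applying H\"older's inequality (with exponents $n/s$ and $n/(n-s)$) to the $n/s$-fractional Laplacian term yields
\[
\left|\int_{\Om} F_k(x)\, g(x,u_k)\, \phi\, dx\right| \leq M(\|u_k\|^{n/s})\|u_k\|^{n/s-1}\|\phi\| + \e_k\|\phi\| \leq C\|\phi\|,
\]
where $F_k(x) := \int_\Om G(y,u_k)/|x-y|^\mu\, dy$, using continuity of $M$ and boundedness of $\{\|u_k\|\}$. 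Thus the family of linear functionals $T_k(\phi) := \int_\Om F_k\, g(\cdot,u_k)\, \phi\, dx$ is uniformly bounded in $X_0^*$, and since $C_c^\infty(\Om)$ is dense in $X_0$, it suffices by Banach-Alaoglu and uniqueness of weak-$*$ limits to establish \eqref{PS-wk0} for $\phi \in C_c^\infty(\Om)$.

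For such $\phi$, $\|\phi\|_\infty < \infty$. By continuity of $g$, we have $g(x,u_k) \to g(x,u)$ a.e.\ in $\Om$. Combining the growth estimate \eqref{k1} with the Trudinger-Moser inequality \eqref{TM-ineq} (applicable because $\|u_k\|^{n/(n-s)}$ stays strictly below $\al_{n,s}$, as a consequence of the strict upper bound on the mountain pass critical level in Lemma \ref{lem7.2}), one checks that $\{G(\cdot,u_k)\}$ is bounded in $L^\rho(\Om)$ for some $\rho > 1$; Vitali's theorem then gives $G(\cdot,u_k) \to G(\cdot,u)$ in $L^\rho(\Om)$, and Proposition \ref{HLS} yields $F_k \to F$ in $L^\tau(\Om)$ for some $\tau > 1$, so $F_k(x) \to F(x)$ a.e.\ along a further subsequence. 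For uniform integrability of $F_k g(\cdot,u_k)\phi$ we split by the level set $\{u_k \geq M\}$: fixing $\delta > 0$ and choosing $M > t_0$ large, condition $(g3)$ gives $g(x,u_k) \leq g(x,u_k) u_k /M$ on this set, whence by \eqref{wk-sol10}
\[
\left|\int_{\{u_k \geq M\}} F_k\, g(x,u_k)\, \phi\, dx\right| \leq \frac{\|\phi\|_\infty}{M}\int_\Om F_k\, g(x,u_k)\, u_k\, dx \leq \frac{C\|\phi\|_\infty}{M} < \delta,
\]
and the same bound applies on $\{u \geq M\}$ for the limit term. On the complement $\{u_k \leq M\}$, $g(\cdot,u_k) \leq \sup_{\ov{\Om} \times [0,M]} g =: g_M < \infty$, so the integrand is dominated in absolute value by $g_M \|\phi\|_\infty F_k$, uniformly bounded in $L^\tau(\Om)$; Vitali's convergence theorem applied on this set yields convergence of the truncated integrals. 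Sending $M \to \infty$ completes the proof for $\phi \in C_c^\infty(\Om)$.

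The principal obstacle is establishing the $L^\rho$-boundedness of $\{G(\cdot,u_k)\}$ for some $\rho > 1$: this requires $\|u_k\|^{n/(n-s)}$ to lie strictly below the Trudinger-Moser threshold $\al_{n,s}$, a subcritical regime guaranteed by the strict upper bound on the mountain pass critical level in Lemma \ref{lem7.2} together with the coercivity-type estimate derived in Lemma \ref{lem712}. A secondary difficulty, absent in the setting of Lemma \ref{wk-sol}, is that a general test function $\phi \in X_0$ need not be bounded; this is precisely what the density-plus-uniform-dual-bound reduction in the first paragraph bypasses.
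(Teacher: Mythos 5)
Your reduction to test functions $\phi\in C_c^\infty(\Om)$ via uniform boundedness of the functionals $T_k$ in $X_0^*$ is sound, and the level-set splitting on $\{u_k\geq M\}$ using $(g3)$ and \eqref{wk-sol10} is a reasonable strategy. However, the argument has a genuine gap at its pivotal point: the assertion that $\|u_k\|^{n/(n-s)}$ stays strictly below $\al_{n,s}$, which you invoke to get $L^\rho$-boundedness of $\{G(\cdot,u_k)\}$ for some $\rho>1$ via Trudinger--Moser. Lemma \ref{lem7.2} bounds the mountain-pass level $c_*$; it does not bound the norms of a Palais--Smale sequence in the subcritical Moser regime. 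Nor does Lemma \ref{lem712}: the boundedness proof gives only $M(\|u_k\|^{n/s})\|u_k\|^{n/s}\leq C$ for some (unspecified) $C$ depending on the level $c$ and the constants, and this $C$ has no reason to sit below the Moser threshold. In fact, in the paper's proof of Claim~1 of Theorem \ref{thm711}, the bound $\|u_k\|^{n/s}\leq\left(\frac{2n-\mu}{2n}\al_{n,s}\right)^{(n-s)/s}$ is derived only under the (contradictory) hypothesis that the weak limit $u_0\equiv0$, which forces the Choquard term to vanish; for a genuine nontrivial weak limit this estimate is unavailable. Moreover, Lemma \ref{PS-ws} is stated for an arbitrary Palais--Smale sequence at an arbitrary level $c$, so even conditioning on $c=c_*$ is not permitted.

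The paper sidesteps this difficulty entirely by a different, weaker tool: it shows that the densities $\mu_k := \bigl(\int_\Om G(y,u_k)|x-y|^{-\mu}\,dy\bigr)g(\cdot,u_k)$ are bounded in $L^1_{\mathrm{loc}}(\Om)$ (by testing $J'(u_k)$ against smooth cut-offs and using H\"older plus the uniform bound on $\|u_k\|$), extracts a weak-$*$ limit Radon measure $\mu$, proves $\mu$ is absolutely continuous with respect to Lebesgue measure by feeding the Palais--Smale identity back in, and then identifies the Radon--Nikodym density via pointwise a.e.\ convergence. This path requires no Moser-type integrability of $\exp(|u_k|^{n/(n-s)})$ and hence no subcriticality of $\|u_k\|$. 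Your Vitali/dominated-convergence scheme would give a cleaner proof if the subcritical estimate were available, but as written it asserts a property of general PS sequences that the preceding lemmas do not supply and that is, in general, false. To repair it you would either have to restrict the Lemma to PS sequences known a priori to be subcritical, or replace the $L^\rho$-boundedness step by an argument, like the paper's, that uses only the $L^1$-type bound \eqref{wk-sol10}.
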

\begin{proof}
{As we argued} in previous Lemma, we have that {there exists a $u\in X_0$ such that,} up to a subsequence, $u_k \rightharpoonup u$ weakly in $X_0$, $u_k \to u$ pointwise a.e. in $\mb R^n$,  $\|u_k\| \to \tau$ as $k \to \infty$, for some $\tau \geq 0$ and $u_k \to u$ strongly in $L^q(\Om)$, $q \in [1,\infty)$ as $k \to \infty$.

Let $\Om' \subset\subset \Om$ and $\varphi \in C_c^\infty(\Om)$ such that $0\leq \varphi \leq 1$ and $\varphi \equiv 1$ in $\Om' $. Then by taking $\varphi$ as a test function in \eqref{kc-PS-bdd1}, we obtain the following estimate
\begin{equation*}
\begin{split}
&\int_{\Om^{'}}\left( \int_\Om \frac{G(y,u_k)}{ |x-y|^\mu}dy\right)g(x,u_k)~dx \leq \int_\Om \left( \int_\Om \frac{G(y,u_k)}{ |x-y|^\mu}dy\right)g(x,u_k)\varphi~dx\\
&\leq  \e_k \left\|\varphi\right\| + M(\|u_k\|^{\frac {n}{s}}) \int_{\mb R^{2n}} \frac{|u_{k}(x)-u_k(y)|^{\frac{n}{s}-2}(u_k(x)-u_k(y))(\varphi(x)-\varphi(y))}{|x-y|^{2n}}~dxdy\\
& \leq \e_k \|\varphi\|+ C \|u_k\| \|\varphi\| {\leq C},
\end{split}
\end{equation*}
since $\|u_k\|\leq C_0$ for all $k$.  This implies that the sequence $\{\mu_k\}:=\left\{\left( \int_\Om\frac{G(y,u_k)}{ |x-y|^\mu}dy\right)g(x,u_k)\right\}$ is bounded in $L^1_{\text{loc}}(\Om)$ which implies that up to a subsequence, $\mu_k \to \mu$ in the ${weak}^*$-topology as $k \to \infty$, where $\mu$ denotes a Radon measure. So for any $\phi \in C_c^\infty(\Om)$ we get
\[\lim_{k \to \infty}\int_\Om \left( \int_\Om\frac{G(y,u_k)}{|x-y|^\mu}dy\right)g(x,u_k)\phi ~dx = \int_\Om \phi ~d\mu,\; \forall \;\phi \in C_c^\infty(\Om). \]
Since $u_k$ satisfies \eqref{kc-PS-bdd1},  for any measurable set $E \subset \Om$, {taking $\phi \in C_c^\infty(\Om)$ such that supp$\phi \subset E$, we get that
\begin{align*}
\mu(E)&=\int_E \phi~ d\mu= \lim_{k \to \infty} \int_E\int_\Om \left( \frac{G(y,u_k)}{|x-y|^\mu}dy\right)g(x,u_k)\phi(x) ~dx \\
&= \lim_{k \to \infty} \int_\Omega\int_\Om \left( \frac{G(y,u_k)}{|x-y|^\mu}dy\right)g(x,u_k)\phi(x) ~dx\\
&= \lim_{k \to \infty} M(\|u_k\|^{\frac{n}{s}})\int_{\mb R^{n}}\int_{\mb R^{n}} \frac{|u_{k}(x)-u_k(y)|^{\frac{n}{s}-2}(u_k(x)-u_k(y))(\phi(x)-\phi(y))}{|x-y|^{2n}}~dx dy\\
& {= M(\tau^{\frac{n}{s}}) \int_{\mb R^{n}}\int_{\mb R^{n}} \frac{|u(x)-u(y)|^{\frac{n}{s}-2}(u(x)-u(y))(\phi(x)-\phi(y))}{|x-y|^{2n}}~dx dy,}
\end{align*}
where we used the continuity of $M$} and weak convergence of $u_k$ to $u$ in $X_0$. This implies that $\mu$ is absolutely continuous with respect to the Lebesgue measure. Thus, Radon-Nikodym theorem establishes that there exists a function $h \in L^1_{\text{loc}}(\Om)$ such that for any $\phi\in C^\infty_c(\Omega)$, $\int_\Om \phi~ d\mu = \int_\Om \phi h~dx$.
Therefore for any $\phi\in C^\infty_c(\Omega)$ we get
\[\lim_{k \to \infty}\int_\Om\left( \int_\Om \frac{G(y,u_k)}{|x-y|^\mu}dy\right)g(x,u_k)\phi~ ~dx = \int_\Om \phi h~dx = \int_\Om  \left( \int_\Om \frac{G(y,u)}{|x-y|^\mu}dy\right)g(x,u)\phi~ ~dx \]
and the above holds for any $\phi \in X_0$ using the density argument. This completes the proof.\hfill{\QED}
\end{proof}


\noi Now we define the Nehari manifold associated to the functional $J$, as
\[ \ds \mc{N}:=\{0\not\equiv u\in X_0:\langle J^{\prime}(u),u \rangle=0\}\]
and let $\ds b:=\inf_{u\in \mc{N}} J(u)$. Then we need the following Lemma to compare  $c_*$ and $b$.
\begin{Lemma}\label{lem7.3}
If condition $(g4)$ holds, then for each $x\in\Omega$, $ tg(x,t)-\frac{\gamma n}{2 s} G(x,t)$ is increasing for $t\ge0$. In particular $tg(x,t)-\frac{\gamma n}{2s} G(x,t)\geq 0$ for all $(x,t)\in \Omega\times [0,\infty)$ which implies $\frac{G(x,t)}{t^{\frac{\gamma n}{2 s}}}$ is non-decreasing for $t>0$.
\end{Lemma}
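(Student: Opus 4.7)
The plan is to fix $x \in \Omega$, introduce the auxiliary function $\phi(t) := tg(x,t) - \frac{\gamma n}{2s} G(x,t)$ on $[0,\infty)$, and show directly that $\phi$ is (strictly) increasing; non-negativity and the claim about $G(x,t)/t^{\gamma n/(2s)}$ then both follow as formal consequences. Condition $(g1)$ forces $h \in C^1(\overline{\Omega}\times \mathbb R)$, hence $g(x,t) = h(x,t)\exp(|t|^{n/(n-s)})$ is $C^1$ in $t$, so $\phi$ is differentiable with
\[
\phi'(t) = g(x,t) + t g_t(x,t) - \frac{\gamma n}{2s}\, g(x,t) = t g_t(x,t) - \Big(\frac{\gamma n}{2s} - 1\Big) g(x,t).
\]

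The central step is to convert $(g4)$ into a pointwise inequality on derivatives. Since $t \mapsto g(x,t)/t^{l}$ is increasing on $(0,\infty)$, differentiation of the quotient yields
\[
\frac{d}{dt}\!\left(\frac{g(x,t)}{t^{l}}\right) = \frac{t g_t(x,t) - l\,g(x,t)}{t^{l+1}} \geq 0,
\]
so $t g_t(x,t) \geq l\, g(x,t)$ for all $t>0$. Plugging this into the expression for $\phi'$ gives
\[
\phi'(t) \geq \Big(l + 1 - \frac{\gamma n}{2s}\Big) g(x,t) > 0 \qquad \text{for all } t>0,
\]
because $l > \frac{\gamma n}{2s} - 1$ by $(g4)$ and $g(x,t)>0$ for $t>0$ by $(g1)$. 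Hence $\phi$ is strictly increasing on $[0,\infty)$, uniformly in $x \in \Omega$.

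The second assertion follows at once: since $g(x,0)=0$ and $G(x,0)=0$ from $(g1)$, we have $\phi(0)=0$, and monotonicity forces $\phi(t)\geq 0$ on $[0,\infty)$. For the final assertion, one more differentiation yields
\[
\frac{d}{dt}\!\left(\frac{G(x,t)}{t^{\gamma n/(2s)}}\right) = \frac{tg(x,t) - \frac{\gamma n}{2s}\,G(x,t)}{t^{\gamma n/(2s)+1}} = \frac{\phi(t)}{t^{\gamma n/(2s)+1}} \geq 0,
\]
so $t \mapsto G(x,t)/t^{\gamma n/(2s)}$ is non-decreasing on $(0,\infty)$.

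The argument is essentially a sequence of elementary calculus manipulations; the only real content is the translation of $(g4)$ into the pointwise bound $tg_t(x,t)\geq l\,g(x,t)$, which makes the sign of $\phi'$ transparent. No serious obstacle is expected — one only needs to make sure the $C^1$ regularity in $t$ (inherited from $(g1)$) is explicitly invoked to justify the differentiations of $\phi$ and of $G/t^{\gamma n/(2s)}$.
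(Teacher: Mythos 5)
Your proof is correct, and it takes a genuinely different route from the paper. The paper avoids derivatives entirely: for $0<t<r$ it writes $tg(x,t)-\tfrac{\gamma n}{2s}G(x,t)$ as $\tfrac{g(x,t)}{t^l}t^{l+1}-\tfrac{\gamma n}{2s}G(x,r)+\tfrac{\gamma n}{2s}\int_t^r g(x,\tau)\,d\tau$ and then bounds the integral by $\tfrac{g(x,r)}{r^l}\cdot\tfrac{r^{l+1}-t^{l+1}}{l+1}$ using only the \emph{monotonicity} of $\tau\mapsto g(x,\tau)/\tau^{l}$, together with $l+1>\tfrac{\gamma n}{2s}$, to land on $rg(x,r)-\tfrac{\gamma n}{2s}G(x,r)$. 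You instead differentiate $\phi(t)=tg(x,t)-\tfrac{\gamma n}{2s}G(x,t)$ and convert $(g4)$ into the pointwise bound $tg_t(x,t)\ge l\,g(x,t)$, yielding $\phi'(t)\ge(l+1-\tfrac{\gamma n}{2s})\,g(x,t)>0$. Your version is the infinitesimal form of the paper's integral argument and is arguably cleaner, but it does lean on the $C^1$ regularity of $g$ in $t$ (which you correctly trace back to $h\in C^1$ in $(g1)$ and to $n/(n-s)>1$), whereas the paper's integral comparison needs only continuity plus the monotonicity hypothesis itself, so it is marginally more robust. Both derive the claim about $G(x,t)/t^{\gamma n/(2s)}$ the same way, via the sign of the numerator in the quotient-rule expression.
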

\proof Suppose $0<t<r$. Then for each $x\in\Om$, we obtain
 \begin{align*}
tg(x,t)-\frac{\gamma n}{2 s}G(x,t)&=\frac{g(x,t)}{t^{l}}t^{l+1}-\frac{\gamma n}{2s}G(x,r)+\frac{\gamma n}{2s}\int_{t}^{r} g(x,\tau)d\tau\\
&< \frac{g(x,t)}{t^{l}}t^{l+1} - \frac{\gamma n}{2s} G(x,r)+\frac{\gamma n }{2s(l+1)}\frac{g(x,r)}{r^{l}}(r^{l+1}-t^{l+1})\\
&\leq rg(x,r)- \frac{\gamma n}{2s} G(x,r),
\end{align*}
using $(g4)$. This completes the proof.\QED
\begin{Lemma}\label{3.7}
Under the assumptions $(M2)$ and $(g4)$, it holds $c_*\leq b $.
\end{Lemma}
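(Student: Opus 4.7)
The natural strategy is to build, for every $u\in\mathcal N$, an admissible mountain pass path $\gamma\in\Gamma$ along which the maximum of $J$ does not exceed $J(u)$. For such a $u$, set $\phi(t):=J(tu)$ for $t\ge 0$. I plan to show that
\[
\max_{t\ge 0}\phi(t)=\phi(1)=J(u),
\]
and that $\phi(t)\to-\infty$ as $t\to\infty$ (which already follows from the argument in the proof of Lemma \ref{lem7.1}, since $u\not\equiv0$). Granted this, I pick $t_1>1$ with $J(t_1u)<0$ and take $\gamma(s):=s\,t_1u$: then $\gamma\in\Gamma$ and
\[
c_*\le\max_{s\in[0,1]}J(\gamma(s))=\max_{t\in[0,t_1]}\phi(t)\le J(u).
\]
Taking the infimum over $u\in\mathcal N$ yields $c_*\le b$.

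\noindent The heart of the argument is the claim that $\phi$ attains its maximum at $t=1$. Direct differentiation gives
\[
\phi'(t)=t^{\frac{n}{s}-1}\|u\|^{\frac{n}{s}}M\bigl(t^{\frac{n}{s}}\|u\|^{\frac{n}{s}}\bigr)-\int_\Omega\left(\int_\Omega\frac{G(y,tu)}{|x-y|^\mu}dy\right)g(x,tu)\,u\,dx.
\]
Since $u\in\mathcal N$, the bracket equals $0$ at $t=1$. To compare the two terms for $t\ne1$ I invoke three monotonicity facts already established: (i) from $(M2)$, $\tau\mapsto M(\tau)/\tau^{\gamma-1}$ is non-increasing, which gives $M(t^{n/s}\|u\|^{n/s})\le t^{n(\gamma-1)/s}M(\|u\|^{n/s})$ for $t\ge1$ and the reverse inequality for $t\le 1$; (ii) from Remark \ref{rem2}, $t\mapsto g(x,t)/t^{\frac{\gamma n}{2s}-1}$ is increasing, so $g(x,tu)\ge t^{\frac{\gamma n}{2s}-1}g(x,u)$ wherever $u>0$ and $t\ge 1$ (reversed for $t\le 1$); and (iii) from Lemma \ref{lem7.3}, $G(x,t)/t^{\frac{\gamma n}{2s}}$ is non-decreasing, so $G(y,tu)\ge t^{\frac{\gamma n}{2s}}G(y,u)$ for $t\ge 1$ (reversed for $t\le 1$). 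Multiplying (ii) and (iii) and integrating against the Riesz kernel combines them into a factor $t^{\frac{\gamma n}{s}-1}$ in front of the Choquard term. All integrands are supported where $u>0$ because $(g1)$ forces $G(x,t)=g(x,t)=0$ for $t\le0$, so no sign issues arise.

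\noindent Putting these together, for $t\ge1$,
\[
\phi'(t)\le t^{\frac{\gamma n}{s}-1}\Bigl[M(\|u\|^{n/s})\|u\|^{n/s}-\int_\Omega\Bigl(\int_\Omega\tfrac{G(y,u)}{|x-y|^\mu}dy\Bigr)g(x,u)u\,dx\Bigr]=0,
\]
while for $0<t\le1$ the same chain of inequalities runs the other way and yields $\phi'(t)\ge0$. Hence $\phi$ is non-decreasing on $(0,1]$ and non-increasing on $[1,\infty)$, so $\phi(t)\le\phi(1)$ for all $t\ge0$, as desired.

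\noindent The main technical obstacle is the simultaneous handling of the nonlocal Choquard double integral together with the Kirchhoff coefficient: the right scaling exponent $\frac{\gamma n}{s}-1$ must emerge from the $M$-side \emph{and} from the product of the $g$- and $G$-factors. The choice of Nehari exponent $l>\frac{\gamma n}{2s}-1$ in $(g4)$, together with the $\gamma$-convexity encoded in $(M2)$, is exactly what makes these exponents match; once that matching is verified, the rest is bookkeeping and the standard deformation of the Nehari point into a mountain pass path.
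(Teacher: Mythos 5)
Your proof is correct and follows essentially the same approach as the paper: both define $h(t)=J(tu)$ for $u\in\mathcal N$, use $(M2)$, $(g4)$ and Lemma~\ref{lem7.3} to extract the common scaling factor $t^{\frac{\gamma n}{s}-1}$ from the Kirchhoff term and the Choquard term, deduce $h'(t)\ge0$ on $(0,1]$ and $h'(t)\le0$ on $[1,\infty)$ so that $\max_{t\ge0}J(tu)=J(u)$, and then exhibit the path $t\mapsto t\,t_0u$ with $J(t_0u)<0$ to conclude $c_*\le J(u)$ and hence $c_*\le b$.
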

\begin{proof} Let $u\in \mc{N}$ { be non negative}  and we define $h:{(0,\infty)}\rightarrow \mb R$ by $ h(t)=J(tu)$. Then for all $t>0$
\[h^{\prime}(t)=\langle J^{\prime}(tu),u\rangle= {M}(t^{\frac{n}{s}}\|u\|^{\frac{n}{s}})t^{\frac{n}{s}-1}\|u\|^{\frac{n}{s}}-\int_\Om \left(\int_\Om \frac{G(y,tu)}{|x-y|^{\mu}}dy \right)g(x,tu)u~dx .\]
Since $\langle J^{\prime}(u),u\rangle=0$ and $t\mapsto \frac{g(x,t)}{t^{\frac{\gamma n}{2s}-1}}$ is increasing for $t>0$, we have
{\small
\begin{align*}
h^{\prime}(t)=&\|u\|^{\frac{\gamma n}{s}} t^{\frac{\gamma n}{s}-1}\left(\frac{M(t^{\frac{n}{s}}\|u\|^{\frac{n}{s}})}{t^{(\gamma -1)\frac{n}{s}}\|u\|^{(\gamma-1)\frac{n}{s}}}-\frac{M(\|u\|^{\frac{n}{s}})}{\|u\|^{(\gamma -1)\frac{n}{s}}}\right)\\
&\quad +t^{\frac{\gamma n}{s}-1}\int_{\Om}\left(\int_\Om \frac{\frac{G(y,u)g(x,u)}{u^{\frac{\gamma n}{2s}-1}(x)}}{|x-y|^{\mu}}dy-\int_\Om \frac{\frac{G(y,tu)g(x,tu)}{(tu)^{\frac{\gamma n}{2s}-1}(x)t^{\frac {\gamma n}{2s}}}}{|x-y|^{\mu}}dy \right)u^{\frac{\gamma n}{2s}}(x)dx\\
&\geq \|u\|^{\frac{\gamma n}{s}} t^{\frac{\gamma n}{s}-1}\left(\frac{M(t^{\frac{n}{s}}\|u\|^{\frac{n}{s}})}{t^{\frac{(\gamma -1)n}{s}}\|u\|^{\frac{(\gamma-1)n}{s}}}-\frac{M(\|u\|^{\frac{n}{s}})}{\|u\|^{\frac{(\gamma -1)n}{s}}}\right)\\
&\quad +t^{\frac{\gamma n}{s}-1}\int_{\Om}\left(\int_\Om \left( G(y,u)- \frac{G(y,tu)}{t^{\frac{\gamma n}{2s}}}\right)\frac{1}{|x-y|^{\mu}}dy \right) \frac{g(x,tu)}{(tu)^{\frac{\gamma n}{2s}-1}(x)}u^{\frac{\gamma n}{2s}}(x)dx.\end{align*} }
when $0<t<1$. So using Lemma \ref{lem7.3} and $(M2)$ we have $h^{\prime}(1)=0$, $h^{\prime}(t)\geq 0$ for $0<t<1$ and $h^{\prime}(t)<0$ for $t>1$. Hence $J(u)=\ds \max_{t\geq0}J(tu)$. Now define $f:[0,1]\rightarrow X_0$ as  $f(t)=(t_0 u)t$, where $t_0>1$ is such that $J(t_0 u)<0$. Then we have $f\in \Gamma$ and therefore
\[ c_*\leq\max_{t\in[0,1]}J(f(t))\leq \max_{t\geq 0}J(tu)=J(u)\leq \inf_{u \in \mc N}J(u)= b.\]
Hence the proof is complete.\QED
  \end{proof}

\begin{Definition} A solution $u_0$ of $(\mc M)$ is a ground state if $u_0$ is a weak solution of $(\mc M)$ and satisfies $J(u_0)=\ds \inf _{u\in \mc N} J(u)$.
\end{Definition}
Since $c_*\leq b$ in order to obtain a ground state solution $u_0$ for $(\mc M)$, it is enough to show that there exists a weak solution of $(\mc M)$ such that $J(u_0)=c_*$.

\begin{Lemma}\label{pos-sol}
Any nontrivial solution of problem $(\mc M)$ is nonnegative.
\end{Lemma}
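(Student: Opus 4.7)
The plan is to test the equation with the negative part of $u$ and exploit the sign condition on $g$ together with a pointwise algebraic inequality for the fractional $p$-Laplacian.

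First I would write $u = u^+ - u^-$ with $u^\pm = \max\{\pm u, 0\} \geq 0$, and recall that $u^-$ lies in $X_0$ whenever $u$ does. I would then plug $\phi = -u^-$ into the weak formulation
\[
M(\|u\|^{\frac{n}{s}})\int_{\mb R^{2n}}\frac{|u(x)-u(y)|^{\frac{n}{s}-2}(u(x)-u(y))(\phi(x)-\phi(y))}{|x-y|^{2n}}dxdy = \int_\Om \left(\int_{\Om}\frac{G(y,u)}{|x-y|^{\mu}}dy\right)g(x,u)\phi\, dx.
\]
The right-hand side vanishes identically, because by $(g1)$ we have $g(x,t)=0$ for all $t\leq 0$, so at every $x\in\Om$ either $u(x)>0$ (and then $u^-(x)=0$) or $u(x)\leq 0$ (and then $g(x,u(x))=0$). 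Thus $g(x,u)u^-\equiv 0$ on $\Om$.

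For the left-hand side I would use the elementary pointwise inequality, valid for every $a,b\in\mb R$ and every $p\geq 1$ (here $p=n/s\geq 2$):
\[
|a-b|^{p-2}(a-b)\bigl(b^- - a^-\bigr) \geq |a^- - b^-|^{p},
\]
which one verifies by splitting into the four cases according to the signs of $a$ and $b$. Applying this with $a=u(x)$, $b=u(y)$ and observing that $\phi(x)-\phi(y) = u^-(y)-u^-(x)$ gives
\[
M(\|u\|^{\frac{n}{s}})\,\|u^-\|^{\frac{n}{s}} \;\leq\; M(\|u\|^{\frac{n}{s}})\int_{\mb R^{2n}}\frac{|u(x)-u(y)|^{\frac{n}{s}-2}(u(x)-u(y))\bigl(u^-(y)-u^-(x)\bigr)}{|x-y|^{2n}}dxdy = 0.
\]

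Finally, since $u$ is nontrivial we have $\|u\|>0$, and assumption $(M3)$ (applied with $b = \|u\|^{n/s}$) gives $M(\|u\|^{\frac{n}{s}})\geq \kappa>0$. Hence $\|u^-\|=0$, which forces $u^-\equiv 0$ in $\mb R^n$, i.e., $u\geq 0$ almost everywhere. No part of this is hard; the only subtlety is checking that the right-hand side truly vanishes (which reduces to the sign condition in $(g1)$) and invoking $(M3)$ to divide out the Kirchhoff factor, so there is no real obstacle to overcome.
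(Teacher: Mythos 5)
Your proof is correct and takes essentially the same route as the paper: test the weak formulation against (a multiple of) $u^-$, observe the right-hand side vanishes by $(g1)$, bound the left-hand side from below by $M(\|u\|^{n/s})\|u^-\|^{n/s}$ via the pointwise inequality for the fractional $p$-Laplacian kernel, and conclude $u^-\equiv 0$ using $(M3)$. The only cosmetic differences are the sign of the test function ($-u^-$ versus $u^-$) and that the paper derives the kernel inequality from the algebraic identity $(u(x)-u(y))(u^-(x)-u^-(y)) = -u^+(x)u^-(y)-u^-(x)u^+(y)-|u^-(x)-u^-(y)|^2$ together with $|u^-(x)-u^-(y)|\leq|u(x)-u(y)|$, whereas you verify the equivalent inequality by a case analysis; both are fine.
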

\begin{proof}
Let $u\in X_0\setminus \{0\}$ be a critical point of functional $J$. Clearly $u^{-}={\max\{-u,0\}}\in X_0$. Then $\langle J^{\prime}(u),u^{-}\rangle=0$, i.e.
\begin{align*}
&{M}(\|u\|^{\frac{n}{s}})\int_{\mb R^{2n}}\frac{|u(x)-u(y)|^{\frac{n}{s}-2}(u(x)-u(y))(u^-(x)-u^-(y))}{|x-y|^{2n}}dxdy\\
&\quad \quad=  \int_{\Om}\left(\int_{\Om}\frac{G(y,u)}{|x-y|^{\mu}}dy \right)  g(x,u)u^- dx.
\end{align*}
For a.e. $x,y\in \mb R^n$, using $|u^-(x)-u^-(y)|\leq |u(x)-u(y)|$, we have
\begin{align*}
&|u(x)-u(y)|^{\frac{n}{s}-2} (u(x)-u(y))(u^-(x)-u^-(y))\\
&=  -|u(x)-u(y)|^{\frac{n}{s}-2} (u^{+}(x) u^{-}(y) + u^{-}(x)u^{+}(y) +|u^-(x)-u^-(y)|^{2})\\
&\leq - |u^-(x)-u^-(y)|^{\frac{n}{s}}
\end{align*}
and $g(x,u) u^-=0$ a.e. $x\in \Om$ by assumption. Hence,
\begin{align*}
0\leq - {M}(\|u\|^{\frac{n}{s}})\|u^{-}\|^{\frac{n}{s}}\leq 0.
\end{align*}
So, $u^- \equiv 0$ since $\|u\|>0$ and $(M3)$ holds. Hence $u\geq 0$ a.e. in $\Om$.\QED
\end{proof}

\noi {\textbf{Proof of Theorem \ref{thm711}:} Since $J$ satisfies the Mountain Pass geometry (refer Lemma \ref{lem7.1}), by Mountain Pass Lemma we know that there exists a Palais Smale $\{u_k\}$ sequence for $J$ at $c_*$. Then by Lemma \ref{lem712}, $\{u_k\}$ must be bounded in $X_0$ so that, up to a subsequence, $u_k \rightharpoonup u_0$ weakly in $X_0$, strongly in $L^q(\Om)$ for $ q\in [1, \infty)$, pointwise a.e. in $\Om$, {for some $u_0\in X_0$} and $\|u_k\| \to \rho_0\geq 0$ as $k \to \infty$.}\\
\noi \textbf{Claim 1:} $u_0\not\equiv 0$ in $\Omega$.\\
 \proof
 We argue by contradiction. Suppose that $u_0\equiv 0$. Then using Lemma \ref{wk-sol}, we obtain
  \begin{equation}\label{mt-nw1}
  \int_\Om \left(\int_{\Om}\frac{G(y,u_k)}{|x-y|^{\mu}} dy \right) G(x,u_k)~dx\ra 0 \;\mbox{as}\; k\ra\infty.
  \end{equation}
  This together with $\ds\lim_{k\to\infty}J(u_k)= c_*$ gives that
  \[\lim_{k  \to \infty}\frac{s}{n} \hat{M}(\|u_k\|^{\frac{n}{s}}) = c_*< \frac{s}{n} \hat{M}\left(\left(\frac{2n-\mu}{2n} \al_{n,s}\right)^{\frac{n-s}{s}}\right).\]
  Thus $\hat M$ being increasing function gives that there exists a $k_0\in \mb N$  such that $\|u_k\|^{\frac{n}{s}}\leq \left(\frac{2n-\mu}{2n} \al_{n,s}\right)^{\frac{n-s}{s}}$ for all $k \geq k_0$.
{We fix $k \geq k_0$ and choose $p>1$ close to $1$ and $\e>0$ small enough such that
 \[\frac{2np(1+\e)}{2n-\mu}\|u_k\|^{\frac{n}{n-s}} < \alpha_{n,s}.\]
Using the growth assumptions on $g$ and Theorem \ref{moser} we have
\begin{align*}
 \|g(\cdot,u_k)u_k\|^{\frac{2n-\mu}{2n}}_{L^{\frac{2n}{2n-\mu}}(\Om)}&\leq  C(\e)\left(\int_\Om |u_k|^{\frac{2n\alpha}{2n-\mu}}dx + \int_\Om|u_k|^{\frac{2nr}{2n-\mu}}\exp\left(\frac{2n(1+\e)}{2n-\mu}|u_k|^{\frac{n}{n-s}}\right)dx \right)\\
     &\leq C(\e)\left(\int_\Om |u_k|^{\frac{2n\alpha}{2n-\mu}}dx +  \left(\int_\Om|u_k|^{\frac{2nrp^\prime}{2n-\mu}} dx\right)^{\frac{1}{p^\prime}}\right.\\
     &\quad \quad \quad\left.\left(\int_\Om\exp\left(\frac{2np(1+\e)}{2n-\mu}\|u_k\|^{\frac{n}{n-s}}\left(\frac{|u_k|}{\|u_k\|}\right)^{\frac{n}{n-s}}\right)dx\right)^{\frac{1}{p}} \right)
\end{align*}
{where $1<\alpha<l+1$ and $1<r$.} Thus,
     \begin{align}\label{grwth}
  \|g(\cdot,u_k)u_k\|_{L^{\frac{2n}{2n-\mu}}(\Om)}   & \leq C(\e)\left( \|u_k\|_{L^{\frac{2n\alpha}{2n-\mu}}(\Om)}^{\frac{2n-\mu}{2n\alpha}}+ \|u_k\|_{L^{\frac{2nrp^\prime}{2n-\mu}}(\Om)}^{\frac{2n-\mu}{2nr}} \right)\to 0 \;\text{as}\; k \to \infty,
     \end{align}
where $p^\prime$ denotes the H\"{o}lder conjugate of $p$ and $C(\e)>0$ is a constant depending on $\e$ which may change value at each step.
From the semigroup property of the Riesz potential and  Hardy-Littlewood-Sobolev inequality  we get that
 \begin{align*}
 &\left| \int_\Om \left(\int_{\Om}\frac{G(y,u_k)}{|x-y|^{\mu}} dy \right) g(x,u_k)u_kdx\right|\\
 &\leq \left(\int_\Om \left(\int_{\Om}\frac{G(y,u_k)}{|x-y|^{\mu}} dy \right) G(x,u_k)~dx\right)^{\frac12} \left(\int_\Om \left(\int_\Om\frac{g(y,u_k)u_k}{|x-y|^{\mu}} dy \right) g(x,u_k)u_k~dx\right)^{\frac12}\\
 & \leq \left(\int_\Om \left(\int_{\Om}\frac{G(y,u_k)}{|x-y|^{\mu}} dy \right) G(x,u_k)~dx\right)^{\frac12}C_{n,\mu}\|g(\cdot,u_k)u_k\|_{L^{\frac{2n}{2n-\mu}}(\Om)}\to 0
 \end{align*}
  as $k \to \infty$ using \eqref{mt-nw1} and \eqref{grwth}.}
 This together with $\langle J^{\prime}(u_k),u_k\rangle =0$ implies that $M(\|u_k\|^{\frac{n}{s}})\|u_k\|^{\frac{n}{s}} \ra 0$. From $(M3)$, we deduce that $\|u_k\|\ra 0$. Furthermore, we obtain $\lim_{k\ra \infty} J(u_k)=0=c_*$, which is a contradiction to the fact that $c_*>0$. Hence, we must have $u_0\not\equiv 0$.

\noi \textbf{Claim 2:} $\ds M(\|u_0\|^{\frac{n}{s}})\|u_0\|^{\frac{n}{s}}\geq \int_\Om \left(\int_{\Om}\frac{G(y,u_0)}{|x-y|^{\mu}} dy \right) g(x,u_0)u_0 dx$.\\
\proof Suppose by contradiction that $M(\|u_0\|^{\frac{n}{s}})\|u_0\|^{\frac{n}{s}}< \int_\Om\left(\int_{\Om}\frac{G(y,u_0)}{|x-y|^{\mu}} dy\right) g(x,u_0)u_0 ~dx$. That is, $\langle J^{\prime}(u_0),u_0\rangle<0.$

\noi It is easy to see, using $(M2)$, that $M(t)t \geq M(1)t^\gamma$ when $t\in [0,1]$. So for $0<t< \frac{1}{\|u_0\|}$, using Lemma \ref{lem7.3} and Hardy-Littlewood-Sobolev inequality we have that
\begin{align*}
\langle J^\prime(t u_0),u_0 \rangle  &\geq M(t^{\frac{n}{s}}\|u_0\|^{\frac{n}{s}})t^{\frac{n}{s}-1}\|u_0\|^{\frac{n}{s}} - { \frac{2s}{\gamma n}}\int_\Om  \left(\int_\Om \frac{g(y,tu_0)tu_0(y)}{|x-y|^{\mu}}~dy \right)g(x,tu_0)u_0(x)~dx\\
 &\geq M(1)t^{\frac{\gamma n}{s}-1}\|u_0\|^{\frac{\gamma n}{s}} - \frac{C}{t}\left(\int_\Om |g(x,tu_0)tu_0|^{\frac{2n}{2n-\mu}}~dx\right)^{\frac{2n-\mu}{n}}.
\end{align*}
But from the growth assumptions on $g$ we already know that for $\e>0$, $ \alpha >\frac{\gamma n}{2s}$ and $r > \frac{\gamma n}{2s}$,
\begin{align*}
&\left(\int_\Om |g(x,tu_0)tu_0|^{\frac{2n}{2n-\mu}}~dx\right)^{\frac{2n-\mu}{n}}\\
&\leq C(\e) \left( \int_\Om |tu_0|^{\frac{2n\alpha}{2n-\mu}} +  \|tu_0\|^{\frac{2r n}{2n-\mu}} \left( \int_\Om\exp\left(\frac{4n(1+\e)\|tu_0\|^{\frac{n}{n-s}}}{2n-\mu}\left(\frac{|tu_0|}{\|tu_0\|}\right)^{\frac{n}{n-s}}\right)\right)^{\frac12} \right)^{\frac{2n-\mu}{n}}\\
& \leq C(\e) \left(\|tu_0\|^{2\alpha}+ \|tu_0\|^{2r} \right)
\end{align*}
by choosing $t< \displaystyle\left(\frac{(2n-\mu)\alpha_{n,s}}{4n(1+\e)\|u_0\|^{\frac{n}{n-s}}}\right)^{\frac{n-s}{n}}$ and using Trudinger-Moser inequality. Therefore for $t>0$ small enough as above, we obtain
\[\langle J^\prime(t u_0),u_0 \rangle \geq M(1)t^{\frac{\gamma n}{s}-1}\|u_0\|^{\frac{\gamma n}{s}} - C(\e) \left(t^{2\alpha-1}\|u_0\|^{2\alpha}+ t^{2r-1}\|u_0\|^{2r} \right) \]
 which suggests that $\langle J^\prime(tu_0),u_0 \rangle>0$ when $t$ is sufficiently small. Thus there exists a  $\sigma\in(0,1)$ such that $\langle J^{\prime}(\sigma u_0),u_0\rangle=0$ that is,  $\sigma u_0\in \mc N$. Thus from  Lemmas \ref{lem7.3}, \ref{3.7}  and {Remark \ref{rem-M}}, it follows that
 {\small\begin{align*}
 &c_*\leq b \leq J(\sigma u_0)=J(\sigma u_0)-\frac{s}{n \gamma }\langle J^{\prime}(\sigma u_0),\sigma u_0\rangle\\
 &=\frac{s}{n} \hat M(\|\sigma u_0\|^{\frac{n}{s}})-\frac{s M(\|\sigma u_0\|^{\frac{n}{s}})\|\sigma u_0\|^{\frac{n}{s}}}{n\gamma}+ \frac{s}{n\gamma}\int_\Om \left(\int_{\Om}\frac{G(y,\sigma u_0)}{|x-y|^{\mu}}dy\right)\left(g(x,\sigma u_0)\sigma u_0- \frac{n\gamma}{2s} G(x, \sigma u_0)\right)\\
 &<\frac{s}{n}\hat M(\| u_0\|^{\frac{n}{s}})-\frac{s}{n\gamma}M(\|u_0\|^{\frac{n}{s}})\| u_0\|^{\frac{n}{s}}+\frac{s}{n \gamma}\int_\Om  \left(\int_{\Om}\frac{G(y, u_0)}{|x-y|^{\mu}}dy\right)\left(g(x, u_0)u_0- \frac{n\gamma}{2s} G(x,u_0)\right)dx.
 \end{align*}}
\noi Also by lower semicontinuity of norm and Fatou's Lemma, we obtain
 \begin{align*}
 c_*\leq b&<  \liminf_{k\rightarrow\infty}\left(\frac{s}{n}\hat M(\|u_k\|^{\frac{n}{s}})-\frac{s}{n \gamma}M(\|u_k\|^{\frac{n}{s}})\| u_k\|^{\frac{n}{s}}\right)\\
 &\quad+\liminf_{k\rightarrow\infty}\frac{s}{n\gamma}\int_\Om  \left(\int_{\Om}\frac{G(y,u_k)}{|x-y|^{\mu}}dy\right)\left[g(x, u_k)u_k-\frac{n\gamma}{2s} G(x,u_k)\right]dx\\
 &\leq \lim_{k\rightarrow\infty}\left[J(u_k)-\frac{s}{n\gamma}\langle J^{\prime}(u_k),u_k\rangle\right]=c_*,
 \end{align*}
 which is a contradiction. Hence Claim 2 is proved.\\
\noi \textbf{Claim 3:} $J(u_0)=c_*$.\\
\proof Using $\displaystyle \int_\Om \left(\int_{\Om}\frac{G(y,u_k)}{|x-y|^{\mu}}dy\right) G(x,u_k)dx \to \int_\Om \left(\int_{\Om}\frac{G(y,u_0)}{|x-y|^{\mu}}dy\right) G(x,u_0)dx$ and lower semicontinuity of norm we have $J(u_0)\leq c_*$.
 Now we are going to show that the case $J(u_0)<c_*$ can not occur.
 Indeed,  if $J(u_0)<c_*$ then $\|u_0\|^{\frac{n}{s}}<\rho_0^{\frac{n}{s}}.$ Moreover,
 \begin{equation}\label{7a3}
   \frac{s}{n}\hat M(\rho_0^{\frac{n}{s}})=\lim_{k\rightarrow\infty}\frac{s}{n}\hat M(\|u_k\|^{\frac{n}{s}})=c_*+\frac{1}{2}\int_\Om\left(\int_{\Om}\frac{G(y,u_0)}{|x-y|^{\mu}}dy \right)  G(x,u_0)dx,
 \end{equation}
  This gives that
  \[\ds \rho_0^{\frac{n}{s}} = \hat{M}^{-1}\left( {\frac{n}{s}} c_*+ {\frac{n}{2s}}\int_\Om \left(\int_{\Om}\frac{G(y,u_0)}{|x-y|^{\mu}}dy \right) G(x,u_0)dx\right).\]
  Next defining $v_k=\frac{u_k}{\|u_k\|}$ and $v_0=\frac{u_0}{\rho_0}$, we have $v_k\rightharpoonup v_0$ in $X_0$ and $\|v_0\|<1$. Thus by Lemma \ref{plc},
\begin{equation}\label{7a4}
 \sup_{k\in \mb N}\int_\Omega \exp({p|v_k|^{\frac{n}{n-s}}})~dx<\infty\;\;  \text{for all}\;  1<p<\frac{\alpha_{n,s}}{(1-\|v_0\|^{\frac{n}{s}})^\frac{s}{n-s}}.
 \end{equation}
\noi On the other hand,  by Claim 2, \eqref{cnd-M} and Lemma \ref{lem7.3}, we have
\begin{align*}
J(u_0) &\ge \frac{s}{n}\hat M(\|u_0\|^{\frac{n}{s}})-\frac{s}{n \gamma}M(\|u_0\|^{\frac{n}{s}})\|u_0\|^{\frac{n}{s}} \\
&\quad \quad+\frac{s}{n\gamma}\int_\Om \left(\int_{\Om}\frac{G(y,u_0)}{|x-y|^{\mu}}dy \right) \left(g(x,u_0)u_0- \frac{n\gamma}{2s} G(x,u_0)\right)dx \geq 0.
\end{align*}
 Using this together with Lemma \ref{lem7.2} and the equality,
$\frac ns\left(c_*-J(u_0)\right)=\hat M\left(\rho_{0}^{\frac{n}{s}}\right)-\hat M\left(\|u_0\|^{\frac{n}{s}}\right)$
we obtain
\[\hat M\left(\rho_{0}^{\frac{n}{s}}\right) \le \frac ns c_*+\hat M(\|u_0\|^{\frac{n}{s}})<\hat M\left(\left(\frac{2n-\mu}{2n}\al_{n,s}\right)^{\frac{n-s}{s}}\right)+\hat M(\|u_0\|^{\frac{n}{s}})\]
and therefore by $(M1)$
\begin{equation}\label{7a6}
\rho_{0}^{\frac{n}{s}}< \hat M^{-1}\left(\hat M\left(\left(\frac{2n-\mu}{2n}\al_{n,s}\right)^{\frac{n-s}{s}}\right)+\hat M(\|u_0\|^{\frac{n}{s}})\right)\le \left(\frac{2n-\mu}{2n}\al_{n,s}\right)^{\frac{n-s}{s}}+\|u_0\|^{\frac{n}{s}}.
\end{equation}
Since $\rho_{0}^{\frac{n}{s}}(1-\|v_0\|^{\frac{n}{s}})=\rho_{0}^{\frac{n}{s}}-\|u_0\|^{\frac{n}{s}}$, from \eqref{7a6} it follows that
\[ \rho_{0}^{\frac{n}{s}}< \frac{\left(\frac{2n-\mu}{2n}\al_{n,s}\right)^{\frac{n-s}{s}}}{1-\|v_0\|^{\frac{n}{s}}}.\]
Thus, there exists $\ba>0$ such that $ \|u_k\|^{\frac{n}{n-s}} < \ba < \frac{\al_{n,s}(2n-\mu)}
{2n(1-\|v_0\|^{\frac{n}{s}})^{\frac{s}{n-s}}}$ for $k$ large. We can choose $q>1$ close to $1$ such that $q \|u_k\|^{\frac{n}{n-s}} \le  \ba < \frac{(2n-\mu)\al_{n,s}}{2n(1-\|v_0\|^{\frac{n}{s}})^{\frac{s}{n-s}}}$ and using \eqref{7a4}, we conclude that for $k$ large
\[ \int_\Om \exp\left(\frac{2nq  |u_k|^{n/n-s}}{2n-\mu}\right) dx \le \int_\Om \exp\left(\frac{2n\beta |v_k|^{n/n-s}}{2n-\mu}\right)dx \le C.\]
Let us recall \eqref{rem2} and \eqref{grwth} to get that
\begin{align*}\left|\int_\Om \left(\int_{\Om}\frac{G(y,u_k)}{|x-y|^{\mu}}dy \right) g(x,u_k)u_k~dx\right| &\leq C\left( \|u_k\|_{L^{\frac{2n\alpha}{2n-\mu}}(\Om)}^{\frac{2n-\mu}{2n\alpha}}+ \|u_k\|_{L^{\frac{2nrq^\prime}{2n-\mu}}(\Om)}^{\frac{2n-\mu}{2nr}} \right)\\
& \to C\left( \|u_0\|_{L^{\frac{2n\alpha}{2n-\mu}}(\Om)}^{\frac{2n-\mu}{2n\alpha}}+ \|u_0\|_{L^{\frac{2nrq^\prime}{2n-\mu}}(\Om)}^{\frac{2n-\mu}{2nr}} \right)
\end{align*}
as $k \to \infty$. Then the pointwise convergence of $\left(\displaystyle\int_{\Om}\frac{G(y,u_k)}{|x-y|^{\mu}}dy \right) g(x,u_k)u_k$ to\\ $\left(\displaystyle\int_{\Om}\frac{G(y,u_0)}{|x-y|^{\mu}}dy \right) g(x,u_0)u_0$  as $k \to \infty$ asserts that
\[\lim_{k\to \infty}\int_\Om \left(\int_{\Om}\frac{G(y,u_k)}{|x-y|^{\mu}}dy \right) g(x,u_k)u_k~dx = \int_\Om \left(\int_{\Om}\frac{G(y,u_0)}{|x-y|^{\mu}}dy \right) g(x,u_0)u_0~dx\]
while using the Lebesgue dominated convergence theorem. Now Lemma \ref{PS-ws}, we get
\[\displaystyle\int_\Om \left(\int_{\Om}\frac{G(y,u_k)}{|x-y|^{\mu}}dy \right) g(x,u_k) (u_k-u_0)dx \rightarrow 0\;\mbox{as}\; k\rightarrow \infty.\]
 Since $\langle J^\prime (u_k), u_k-u_0\rangle \rightarrow 0$, it follows that
{\small \begin{equation} \label{na7new}
M(\|u_k\|^{\frac{n}{s}}) \int_{\mb R^{2n}} \frac{|u_k(x)-u_k(y)|^{\frac{n}{s}-2}(u_k(x)-u_k(y))((u_k-u_0)(x)-(u_k-u_0)(y))}{|x-y|^{2n}}dxdy \rightarrow 0.
\end{equation}}
\noi We define $U_{k}(x,y)=u_{k}(x)-u_{k}(y)$ and $U_{0}(x,y)=u_{0}(x)-u_{0}(y)$ then using $u_k\rightharpoonup u_0$ weakly in $X_0$ and boundedness of $M(\|u_k\|^{\frac{n}{s}})$, we have
{\small\begin{equation}\label{na8new}
M(\|u_k\|^{\frac{n}{s}}) \int_{\mb R^{2n}} \frac{|U_0(x,y)|^{\frac{n}{s}-2}  U_0(x,y) (U_k(x,y) -U_0(x,y))}{|x-y|^{2n}}dxdy\ra 0\; \mbox{as}\; k\ra\infty.
\end{equation}}
Subtracting \eqref{na8new} from \eqref{na7new}, we get
\[M(\|u_k\|^{\frac{n}{s}}) \int_{\mb R^{2n}} \frac{(|U_{k}(x,y)|^{\frac{n}{s}-2} U_{k}(x,y) - |U_{0}(x,y)|^{\frac{n}{s}-2} U_{0}(x,y)) (U_{k}(x,y)-U_{0}(x,y))}{|x-y|^{2n}}dxdy \rightarrow 0 \]
as $k\rightarrow \infty$. Now using this and the following
inequality
\begin{align}\label{11e2}
|a-b|^p\leq 2^{p-2}(|a|^{p-2}a-|b|^{p-2}b)(a-b)\;\mbox{for all}\; a,b\in \mb R\;\mbox{and}\; p\geq 2,
\end{align}
 with $a=u_k(x)-u_k(y)$ and $b=u_0(x)-u_0(y)$, we obtain
\[M(\rho_0^{\frac{n}{s}})\int_{\mb R^{2n}}\frac{|U_k(x)- U_0(x)|^{\frac{n}{s}}}{|x-y|^{2n}} dxdy \ra 0\;\mbox{as}\; k\ra\infty.\]
This implies that $u_k\ra u$ strongly in $X_0$ and hence $J(u)=c_*$ which is a contradiction.
Therefore, claim $3$ holds true.
Hence $J(u)=c_*= \lim\limits_{k\to \infty}J(u_k)$ and $\|u_k\| \to \rho_0$ gives that $\rho_0= \|u_0\|$. Finally we have
\begin{align*}
&M(\|u_0\|^{\frac{n}{s}}) \int_\Om \frac{|u_0(x)-u_0(y)|^{\frac{n}{s}-2}(u_0(x)-u_0(y))(\phi(x)-\phi(y))}{|x-y|^{2n}} dx dy\\
&\quad \quad=\int_\Om \left(\int_{\Om}\frac{G(y,u_k)}{|x-y|^{\mu}}dy \right)g(x,u_0) \phi ~dx,
\end{align*}
for all $\phi\in X_0$. Thus, $u_0$ is a non trivial  solution of $(\mc M)$. By Lemma \ref{pos-sol} we obtain that $u_0$ is the required nonnegative solution of $(\mc M)$ which completes the proof. \QED

\noindent{\bf Acknowledgements:} This research is supported by Science and Engineering Research Board, Department of Science and Technology, Government of India, Grant number:\\
ECR/2017/002651. The second author wants to thank Bennett University for its hospitality during her visit there.


\bibliographystyle{plain}
\bibliography{mybibfile}




\end{document}